\begin{document}

\title[Symplectically degenerate maxima]{Symplectically degenerate maxima\\ via generating functions}%

\author{Marco Mazzucchelli}%

\address{UMPA, \'Ecole Normale Sup\'erieure de Lyon, 69364 Lyon Cedex 07, France}%
\email{marco.mazzucchelli@ens-lyon.fr}%

\subjclass[2000]{37J10, 70H12, 58E05}%
\keywords{Hamiltonian diffeomorphisms, periodic orbits, Conley conjecture.}%

\date{July 13, 2012. \emph{Revised:} November 13, 2012}%
\dedicatory{Dedicated to John Mather on the occasion of his 70th birthday.}
% -----------------------------------------

\begin{abstract}
We provide a simple proof of a theorem due to Nancy Hingston, asserting that symplectically degenerate maxima of any Hamiltonian diffeomorphism $\phi$ of the standard symplectic $2d$-torus are non-isolated contractible periodic points or their action is a non-isolated point of the average-action spectrum of $\phi$. Our argument is based on generating functions.
\end{abstract}

\maketitle

\begin{quote}
\begin{footnotesize}
\tableofcontents
\end{footnotesize}
\end{quote}

\section{Introduction}

A remarkable feature of Hamiltonian systems on symplectic manifolds is that they tend to have many periodic orbits. For autonomous systems, this was already noticed by Poincar\'e in one of his so-called ``other'' conjectures:

\vspace{5pt}

\begin{quote}
\selectlanguage{francais}
\emph{``Il y a m\^eme plus : voici un fait que je n'ai pu d\'emontrer rigoureusement, mais qui me para\^it pourtant tr\`es vraisemblable. \'Etant donn\'ees des \'equations de la forme d\'efinie dans le num\'ero 13 \textnormal{[i.e.\ autonomous Hamilton equations]} et une solution particuli\`ere quelconque de ces \'equations, on peut toujours trouver une solution p\'eriodique (dont la p\'eriode peut, il est vrai, \^etre tr\`es longue), telle que la diff\'erence entre les deux solutions soit aussi petite qu'on le veut, pendant un temps aussi long qu'on le veut. D'ailleurs, ce qui nous rend ces solutions p\'eriodiques si pr\'ecieuses, c'est qu'elles sont, pour ainsi dire, la seule br\`eche par o\`u nous puissions essayer de p\'en\'etrer dans une place jusqu'ici r\'eput\'ee inabordable.''}

\flushright Henri Poincar\'e, \cite[page 82]{Poincare:Les_methodes_nouvelles_de_la_mecanique_celeste_Tome_I}.%
\selectlanguage{english}%
\end{quote}

\vspace{5pt}

For non-autonomous systems the abundance of periodic motion is a global phenomenon somehow reflecting the fact that periodic orbits can be detected by a variational principle: they are critical points of the action functional. In 1984, Conley \cite{Conley:Lecture_at_the_University_of_Wisconsin} conjectured that every Hamiltonian diffeomorphism of the standard symplectic torus $(\T^{2d},\omega)$ has infinitely many contractible periodic points (a  periodic point is contractible when it corresponds to a contractible periodic orbit of any associated Hamiltonian system, see Section~\ref{s:discrete_symplectic_action}). The weaker version of this conjecture, for a \emph{generic} Hamiltonian diffeomorphism, was soon established by Conley and Zehnder \cite{Conley_Zehnder:Subharmonic_solutions_and_Morse_theory} and then extended to all the closed symplectically aspherical  manifolds by Salamon and Zehnder \cite{Salamon_Zehnder:Morse_theory_for_periodic_solutions_of_Hamiltonian_systems_and_the_Maslov_index}. The general Conley conjecture is significantly  harder. In dimension 2, for all the closed surfaces other than the sphere, it was established by Franks and Handel \cite{Franks_Handel:Periodic_points_of_Hamiltonian_surface_diffeomorphisms} and further extended to the category of Hamiltonian homeomorphisms by Le Calvez \cite{LeCalvez:Periodic_orbits_of_Hamiltonian_homeomorphisms_of_surfaces}. In higher dimension, the original conjecture for tori was recently established by Hingston \cite{Hingston:Subharmonic_solutions_of_Hamiltonian_equations_on_tori} and further generalized to closed symplectically aspherical manifolds by Ginzburg \cite{Ginzburg:The_Conley_conjecture}. Further extensions,  refinements and related results are contained in \cite{Long:Multiple_periodic_points_of_the_Poincare_map_of_Lagrangian_systems_on_tori, Lu:The_Conley_conjecture_for_Hamiltonian_systems_on_the_cotangent_bundle_and_its_analogue_for_Lagrangian_systems, Mazzucchelli:The_Lagrangian_Conley_conjecture, Ginzburg_Gurel:Local_Floer_homology_and_the_action_gap, Ginzburg_Gurel:Action_and_index_spectra_and_periodic_orbits_in_Hamiltonian_dynamics, Hein:The_Conley_conjecture_for_irrational_symplectic_manifolds, Hein:The_Conley_conjecture_for_the_cotangent_bundle, Ginzburg_Gurel:Conley_Conjecture_for_Negative_Monotone_Symplectic_Manifolds}.

The general strategy of the proof of the Conley conjecture goes along the following lines.  A contractible periodic  point with period $p$ corresponds to a contractible $p$-periodic orbit  of an associated Hamiltonian system, which in turn is a critical point of the Hamiltonian action functional in period $p$. For homological reasons, for every period $p$ one can detect critical points of the action functional with non-trivial local homology in Maslov degree $d$,  where $d$ is the half-dimension of the symplectic manifold. If one assumes that the Hamiltonian diffeomorphism has only finitely many  contractible periodic points, one among these points must be homologically visible in degree $d$ for infinitely many periods. This means that for infinitely many periods $p$, the $p$-periodic orbit corresponding to the periodic point has non-trivial local homology in Maslov degree $d$. By replacing the considered Hamiltonian flow with another one having the same time-$p$ map, one can further realize these orbits as  sufficiently flat local maxima of the Hamiltonian function at every time: for this reason, orbits of this kind are often called symplectically degenerate maxima\footnote{In \cite{Hingston:Subharmonic_solutions_of_Hamiltonian_equations_on_tori}, Hingston calls them topologically degenerate orbits instead.}. The second part or the proof of the Conley conjecture is the crucial one: the existence of a symplectically degenerate maximum $z$ automatically implies the existence of infinitely many other contractible periodic points. Indeed, if $z$ is isolated in the set of periodic points, its average-action turns out to be a non-isolated point in the average-action spectrum of the Hamiltonian diffeomorphism (see Section~\ref{s:discrete_symplectic_action} for the precise definition).

The first part of the proof, namely the detection of a symplectically degenerate maximum, is well understood: it can be carried out for the case of tori by means of Morse theory for the Hamiltonian action in a suitable functional setting, whereas for  more general symplectic manifolds one has to use Floer theory (see e.g.~\cite{Salamon:Lectures_on_Floer_homology}), a special Morse theory for the Hamiltonian action. The second part of the proof, namely the statement asserting that the presence of a  symplectically degenerate maximum implies the existence of infinitely many other periodic points, is much less transparent. For the torus, Hingston \cite{Hingston:Subharmonic_solutions_of_Hamiltonian_equations_on_tori} argued by a sophisticated analysis of the behavior of the action functional in a small neighborhood of a  symplectically degenerate maximum. For more general symplectic manifolds, Ginzburg \cite{Ginzburg:The_Conley_conjecture} used several subtle properties of Floer homology. As already pointed out by Hingston in \cite{Hingston:Subharmonic_solutions_of_Hamiltonian_equations_on_tori}, it is not known whether the new average-action values detected around the average action of a symplectically degenerate maximum $z$  actually correspond to periodic points accumulating at $z$.

The motivation for the current work is to give a simple and more transparent argument for the second part of the proof of the Conley conjecture in the case of Hamiltonian diffeomorphisms of tori. 
\begin{thm}\label{t:main}
Let $z$ be a symplectically degenerate maximum of a Hamiltonian diffeomorphism $\phi$ of the standard symplectic torus $(\T^{2d},\omega)$, with $d\geq1$. If, for all $n\in\N$, $z$ is isolated in the set of contractible fixed points of $\phi^n$, then the action of $z$ is a non-isolated point of the average-action spectrum of $\phi$.
\end{thm}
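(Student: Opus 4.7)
The plan is to carry out the whole argument on the finite-dimensional side provided by a generating function for $\phi^n$. For each $n\geq 1$, the Chaperon--Laudenbach--Sikorav discrete symplectic action of Section~\ref{s:discrete_symplectic_action} provides a smooth function $F_n\colon E_n\to\R$ on a finite-dimensional manifold whose critical points correspond bijectively to the contractible fixed points of $\phi^n$, with critical values equal to the associated Hamiltonian actions. The iterate of $z$ corresponds to a critical point $\xi_n\in E_n$ with $F_n(\xi_n)=n A(z)$, which by hypothesis is isolated for every $n$.

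The first substantive step is to translate the symplectically degenerate maximum condition into quantitative local information about $F_n$ near $\xi_n$. Since $\phi^n$ is unchanged when one reparametrizes the generating Hamiltonian suitably, I would first arrange that $z$ is a local maximum of $H_t$ for every $t$ with arbitrarily small Hessian along the orbit. I would then convert this ``flat maximum'' property into the following assertion: for every $\epsilon>0$ and every sufficiently large $n$, there exists a small $d$-dimensional disc $D_n\subset E_n$ centered at $\xi_n$ on which $F_n\leq nA(z)$, with $F_n\leq nA(z)-n\epsilon$ on $\partial D_n$, and such that $[D_n,\partial D_n]$ represents a nontrivial class in the local homology of $F_n$ at $\xi_n$ in degree $d$ (after the Maslov shift intrinsic to the generating function construction).

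With this local input, the proof would conclude via a min-max of Viterbo type: by capping $[D_n,\partial D_n]$ with a suitable global cocycle on $E_n$ coming from the topology of $\T^{2d}$, one obtains a spectral invariant that detects a critical value $c_n$ of $F_n$ with $nA(z)-n\epsilon\leq c_n<nA(z)$, and the isolation hypothesis then forces $c_n$ to be attained at a critical point distinct from $\xi_n$, hence at a genuinely new contractible periodic point of $\phi^n$. Dividing by $n$ and letting $\epsilon\to 0$ along a suitable sequence produces the desired elements of the average-action spectrum converging to, but distinct from, $A(z)$. The main obstacle I foresee lies in the second step: the iterated Hessian of $F_n$ at $\xi_n$ generally has eigenvalues that grow with $n$, so an SDM-specific normal form near $z$ must be used to produce a disc $D_n$ whose $F_n$-values stay within an $o(n)$-window of $nA(z)$ uniformly. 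Making this quantitative translation of the SDM hypothesis into the generating function framework is, in my view, the genuinely new technical content of the argument, and it is what allows the second part of the Conley conjecture to be treated entirely within classical finite-dimensional variational calculus.
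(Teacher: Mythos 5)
Your outline starts in the right place (the discrete symplectic action, and using the top-degree local homology of the iterated critical point), but three of its central claims are not right, and together they make the argument unworkable as stated.

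First, the direction is reversed. The paper's Theorem~\ref{t:symplectically_degenerate_maxima} produces new critical values of $\An n$ slightly \emph{above} $n\,\A(z_0\iter k)$, not below. The SDM hypothesis (that $z$ is a local maximum of the generating functions) controls the behavior of the action at levels at and just below the critical level, and it implies that the generating cycle of the local homology of $z\iter{kn}$ can be homotoped into the sublevel $\{\An n < nc\}$ without going above $nc + \epsilon$; this kills the local homology class in $\Hom_*(\{\An n<nc+\epsilon\},\{\An n<nc\})$ for large $n$. There is no mechanism by which the SDM condition forces critical values in $[nc-n\epsilon, nc)$, and indeed such values need not exist.

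Second, the ``local input'' you posit does not exist. You ask for a disc $D_n$ representing the local homology class of $\xi_n$ with $F_n \leq nA(z)-n\epsilon$ on $\partial D_n$. The disc representing the top-degree local homology of $\xi_n$ has $F_n < nA(z)$ on its boundary, but the drop is $O(1)$, not $O(n)$; the action cannot drop by an amount proportional to $n$ on a disc centered at a critical point. What is actually true — and is the real content, proved in Lemma~\ref{l:homological_vanishing} — is that the disc can be deformed so that its image lies entirely in $\{F_n < nA(z)\}$, with the homotopy staying in $\{F_n < nA(z)+\epsilon\}$; this is much weaker and requires an explicit estimate using that all the $f_j$ are $\leq 0$ near the orbit. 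You flag this as the ``main obstacle'' and leave it unaddressed, but this is precisely the part of the argument that cannot be sidestepped.

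Third, the disc has the wrong dimension. The local homology $\Loc_*(\xi_n)$ of the critical point $\xi_n = z_0\iter{kn}$ lives in the domain of $\An n$, which has dimension $2dkn$, and the SDM hypothesis puts the nontrivial local homology in degree $dkn+d$ (this is the Maslov degree $d$ plus the shift $dkn$). The representing disc therefore has dimension $dkn+d$ growing with $n$, not a fixed $d$. A $d$-dimensional disc could not represent a class in that degree. Finally, the min-max-with-a-global-cocycle step you describe is the mechanism the paper uses to \emph{detect} a symplectically degenerate maximum in the first place (Theorem~\ref{t:conley}), not to derive the non-isolation of its action; for the latter, the paper combines the homological vanishing lemma with the injection of local homology of Proposition~\ref{p:injection_of_local_homology__general_statement}, and the contradiction between the two is what produces the new periodic points.
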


In the paper, we will actually prove a more precise statement (Theorem~\ref{t:symplectically_degenerate_maxima}) that also gives some information on the period of the periodic points that are found. For its proof, instead of relying on the variational principle associated with the Hamiltonian action functional, we make use of a ``discrete'' version of it, introduced by Chaperon \cite{Chaperon:Une_idee_du_type_geodsiques_brisees_pour_les_systmes_hamiltoniens, Chaperon:An_elementary_proof_of_the_Conley_Zehnder_theorem_in_symplectic_geometry}. The functional associated with this variational principle, called the discrete symplectic action,  is a generating family for the graph of the Hamiltonian diffeomorphism. 
In the final section of the paper, we will employ this function to provide quick proofs of special cases of the Conley conjecture: for Hamiltonian diffeomorphisms of $(\T^{2d},\omega)$ whose graph is described by a single generating function (which is almost a corollary of Theorem~\ref{t:main}), and for non-degenerate Hamiltonian diffeomorphisms of $(\T^{2d},\omega)$ (which is one of the celebrated Conley-Zehnder's theorems).

\subsection{Organization of the paper}
In Section~\ref{s:Preliminaries} we provide the reader with the necessary background. We introduce the variational principle associated with the discrete symplectic action and discuss the relation between its Morse indices at a critical point and the Maslov index of the corresponding periodic point of the considered Hamiltonian diffeomorphism. Moreover, we review the definition and the basic properties of local homology groups. Section~\ref{s:strongly_visible_points} is the core of this paper: we define and investigate the properties of symplectically degenerate maxima, and we prove Theorem~\ref{t:main}. Finally, in Section~\ref{s:Conley_conjecture} we provide quick proofs of some special cases of the Conley conjecture. 
%In a brief appendix  at the end of the paper, we will prove an elementary technical result from Morse theory that is needed in Section~\ref{s:Preliminaries} for our discussion on local homology groups.

\subsection{Acknowledgments} 
I wish to thank Alberto Abbondandolo, Viktor Ginzburg and Nancy Hingston  for encouraging and for useful remarks, and Dietmar Salamon for explaining to me some details of the relation between Maslov and Morse indices of generating families. I especially thank the anonymous referee for his careful reading of the preprint, for catching several inaccuracies in the first draft, and for providing important comments. This research was supported by the ANR project ``KAM faible''.

%-------------------------------------------------

\section{Preliminaries}\label{s:Preliminaries}

Throughout this paper, $\phi$ will denote a Hamiltonian diffeomorphism of the standard symplectic torus $(\T^{2d},\omega)$, where $d\geq1$. For us, $\T^{2d}$ is the quotient $\R^{2d}/\Z^{2d}$ and we will denote the global Darboux coordinates on it as usual by $z=(x,y)$, so that $\omega=\diff x\wedge\diff y$. We refer the reader to \cite{McDuff_Salamon:Introduction_to_symplectic_topology, Hofer_Zehnder:Symplectic_invariants_and_Hamiltonian_dynamics} for an introduction to symplectic geometry and Hamiltonian dynamics.

\subsection{The discrete symplectic action}\label{s:discrete_symplectic_action}
We are interested in the periodic points of $\phi$, namely at those $z_0\in\T^{2d}$ such that $\phi^p(z_0)=z_0$ for some period $p\in\N=\{1,2,3,...\}$. By definition of Hamiltonian diffeomorphism, $\phi=\phi_1$ is the time-1 map of a Hamiltonian flow $\phi_t:\T^{2d}\to\T^{2d}$, and without loss of generality one can always assume $\phi_t$ to be generated by a Hamiltonian that is $1$-periodic in time, in such a way that 
\begin{align}
\label{e:Hamiltonian_periodic}
\phi_{t+1}=\phi_{t}\circ\phi_1. 
\end{align}
Therefore, there is a one-to-one correspondence between periodic points $z_0$ of $\phi$ and periodic orbits $\gamma(t)=\phi_t(z_0)$ of the corresponding Hamiltonian flow. A well-known property of Hamiltonian diffeomorphisms is that, at least on closed symplectically aspherical manifolds (among which are the tori), the fact that a periodic orbit corresponding to a periodic point of $\phi$ be contractible does not depend on the choice of the Hamiltonian flow $\phi_t$ having $\phi$ as time-1 map, see \cite[Prop.~3.1]{Schwarz:On_the_action_spectrum_for_closed_symplectically_aspherical_manifolds}. Hence, it makes sense to talk about \textbf{contractible periodic points} of the Hamiltonian diffeomorphism~$\phi$.

It is well known that contractible periodic orbits of Hamiltonian systems can be detected by means of a variational principle: they are critical points of the action functional. In the current paper, we will make use of a discrete version of this variational principle, which was introduced by Chaperon \cite{Chaperon:Une_idee_du_type_geodsiques_brisees_pour_les_systmes_hamiltoniens, Chaperon:An_elementary_proof_of_the_Conley_Zehnder_theorem_in_symplectic_geometry} in the early eighties in order to provide a simple proof of the Arnold conjecture for tori. Here, we recall this principle following Robbin and Salamon \cite{Robbin_Salamon:Phase_functions_and_path_integrals}. First of all, let us factorize $\phi$ as
\begin{align}\label{e:factorization_phi}
 \phi=\psi_{k-1}\circ\psi_{k-2}\circ...\circ\psi_0,
\end{align}
where each $\psi_j:\T^{2d}\to\T^{2d}$ is the Hamiltonian diffeomorphism defined by 
\[
\psi_j
:=
\phi_{(j+1)/k}\circ\phi_{j/k}^{-1}.
\]
By choosing the discretization step $k$ large enough we can make the $\psi_j$'s as $C^1$-close to the identity as we wish. Eventually, we can lift them to $\Z^{2d}$-equivariant Hamiltonian diffeomorphisms $\tilde\psi_j:\R^{2d}\to\R^{2d}$ that are still $C^1$-close to the identity, and therefore  admit generating functions $\tilde f_j:\R^{2d}\to\R$ such that
\begin{align}\label{e:generating_function}
  \begin{array}{c}
    \Big.(x_{j+1},y_{j+1})=\psi_j(x_j,y_j) \\ 
    \bigg.\mbox {if and only if} \\ 
    \left\{
    \begin{array}{@{}l}
    x_{j+1}-x_j=\partial_y \tilde f_j(x_{j+1},y_j),\Big. \\ 
    y_{j+1}-y_j=-\partial_x \tilde f_j(x_{j+1},y_j).\Big. 
  \end{array}
\right. \\ 
  \end{array}
\end{align}
Hereafter, $j$ must be interpreted as a cyclic index, i.e.\ $j\in\Z_k$. The functions $\tilde f_j$ are invariant by the action of $\Z^{2d}$, and therefore they descend to functions of the form 
\[f_j:\T^{2d}\to\R,\] 
see \cite[Lemma~11.1]{McDuff_Salamon:Introduction_to_symplectic_topology} or \cite[page~223]{Gole:Symplectic_twist_maps}. We define the \textbf{discrete symplectic action}
\[ 
\tilde\A:(\R^{2d})^{\times k}=\underbrace{\R^{2d}\times...\times\R^{2d}}_{\times k}\to\R
\]
by
\[
\tilde\A(\zz)
:=
\sum_{j\in\Z_k}
\Big(
\langle
y_j,x_j-x_{j+1}
\rangle
+
f_j(x_{j+1},y_j)
\Big).
\]
Here, $\zz=(z_0,...,z_{k-1})$ and $z_j=(x_j,y_j)$. Notice that $\tilde\A$ is invariant by the diagonal action of $\Z^{2d}$ on $(\R^{2d})^{\times k}$, and therefore it descends to a function 
\[\A:(\R^{2d})^{\times k}/\Z^{2d}\to\R.\] 
A straightforward computation shows that $[\zz]\in(\R^{2d})^{\times k}/\Z^{2d}$ is a critical point of $\A$ if and only if $z_{j+1}=\tilde\psi_j(z_j)$ for every $j\in\Z_k$, and therefore if and only if $[z_0]\in\T^{2d}$ is a contractible fixed point of the Hamiltonian diffeomorphism $\phi$. In particular, critical points of $\A$ are in one-to-one correspondence with contractible fixed points of the Hamiltonian diffeomorphism $\phi$. 

Let $H_t:\T^{2d}\to\R$ be the 1-periodic Hamiltonian function generating the flow $\phi_t$. Set $\zeta(t)=(\chi(t),\nu(t)):=\tilde\phi_t(z_0)$ to be an arbitrary  line of the lifted flow, $z_j:=\zeta(\tfrac jk)$, and $\zz=(z_0,...,z_{k-1})$. Up to normalizing each generating function $f_j$ with an additive constant (depending only on $H_t$), we have
\begin{align*}
f_j[(x_{j+1},y_j)]=\langle y_j,x_{j+1}-x_j\rangle + \int_{j/k}^{(j+1)/k} \Big(- \langle \nu(t),\tfrac{\diff}{\diff t}\chi(t)\rangle + H_t(\zeta(t)) \Big)\,\diff t,
\end{align*}
If $[(x_{j+1},y_j)]$ is a critical point of $f_j$,  its critical value is given by
\begin{align*}
\int_{j/k}^{(j+1)/k} \Big(- \langle \nu(t),\tfrac{\diff}{\diff t}\chi(t)\rangle + H_t(\zeta(t)) \Big)\,\diff t.
\end{align*}
Analogously, if $[\zz]$ is a critical point of $F$,  its critical value is 
\begin{align*}
\int_{0}^{1} \Big(- \langle \nu(t),\tfrac{\diff}{\diff t}\chi(t)\rangle + H_t(\zeta(t)) \Big)\,\diff t.
\end{align*}
This shows that the critical values of the discrete symplectic action $F$ are indeed action values of the periodic orbits corresponding to the critical points of $F$. We define the \textbf{action spectrum} of $\phi$ to be the set of critical values of $F$. In the following, in order to simplify the notation we will omit the squared brackets~$[\cdot]$.

We wish to apply the machinery of critical point theory, and more specifically Morse theory, to the discrete symplectic action. In order to do this, we need to make sure that the so-called Palais-Smale condition~\cite{Palais:Morse_theory_on_Hilbert_manifolds} is satisfied: every sequence $\{\zz_\alpha\ |\ \alpha\in\N\}$ such that $\A(\zz_\alpha)\to c$ and $|\diff\A(\zz_\alpha)|\to0$ admits a converging subsequence. 

\begin{prop}\label{p:Palais_Smale}
The discrete symplectic action $\A$ satisfies the Palais-Smale condition, and its critical points  are contained in a compact subset of its domain.
\end{prop}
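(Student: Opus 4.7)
The plan is to exploit the cylindrical structure of the domain: the diagonal $\Z^{2d}$-action lets us identify $(\R^{2d})^{\times k}/\Z^{2d}$ with $\T^{2d}\times\R^{2d(k-1)}$, e.g.\ by sending $[(z_0,\dots,z_{k-1})]$ to $([z_0],z_1-z_0,\dots,z_{k-1}-z_0)$. Both claims will then follow from a uniform bound on the consecutive differences $|z_{j+1}-z_j|$, which forces the whole tuple to sit in a compact region once $z_0$ is normalized to lie in a fixed fundamental domain $D\subset\R^{2d}$ for $\T^{2d}$.

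The key observation is that each generating function $f_j$ descends to the compact torus, hence $\partial_x f_j$ and $\partial_y f_j$ are globally bounded by some constant $C$ independent of $j$. First I would compute
\[
\partial_{y_j}\tilde\A(\zz)=(x_j-x_{j+1})+\partial_y f_j(x_{j+1},y_j),\qquad \partial_{x_j}\tilde\A(\zz)=(y_j-y_{j-1})+\partial_x f_{j-1}(x_j,y_{j-1}),
\]
so that $|z_{j+1}-z_j|\leq |\diff\tilde\A(\zz)|+C'$ for a constant $C'$ depending only on the $f_j$'s. For the compactness of critical points, the relation $\diff\A=0$ is exactly $z_{j+1}=\tilde\psi_j(z_j)$; since $\tilde\psi_j$ is a lift of a diffeomorphism of $\T^{2d}$, the displacement $\tilde\psi_j-\mathrm{id}$ descends to a continuous function on $\T^{2d}$, hence is uniformly bounded. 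Normalizing $z_0\in D$ then confines every critical point to the compact set $D\times B$, where $B\subset\R^{2d(k-1)}$ is a ball of radius $\sim kC'$.

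For the Palais--Smale property, consider a sequence $\zz_\alpha$ with $|\diff\A(\zz_\alpha)|\to0$. Choose representatives with $z_0^{(\alpha)}\in D$. The bound above gives $|z_{j+1}^{(\alpha)}-z_j^{(\alpha)}|\leq C'+1$ eventually, so telescoping shows that the whole tuple $(z_0^{(\alpha)},\dots,z_{k-1}^{(\alpha)})$ stays in a fixed compact subset of $(\R^{2d})^{\times k}$. By Bolzano--Weierstrass we extract a convergent subsequence, which descends to a convergent subsequence in the quotient.

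I do not expect a serious obstacle here; the only mildly delicate point is keeping track that the $\Z^{2d}$-quotient is taken diagonally, so that normalizing $z_0$ to a fundamental domain is consistent with the action and with the sums defining $\tilde\A$. The hypothesis $\A(\zz_\alpha)\to c$ plays no role in the extraction itself and merely guarantees, via continuity of $\diff\A$, that the limit is a critical point at level $c$.
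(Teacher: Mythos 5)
Your proof is correct and uses the same cylindrical identification $(\R^{2d})^{\times k}/\Z^{2d}\cong\T^{2d}\times(\R^{2d})^{\times k-1}$ as the paper, but arrives at the conclusion by a somewhat more explicit route. The paper packages $\A$ (in these coordinates) as a non-degenerate quadratic form $Q(\zzeta)=\langle A\cchi,\uupsilon\rangle$ plus a function $B$ with bounded $C^1$ norm on the compact space $(\T^{2d})^{\times k}$, and invokes the coercivity $|\diff Q(\zzeta)|\to\infty$ as $|\zzeta|\to\infty$. You instead compute $\partial_{x_j}\tilde\A$ and $\partial_{y_j}\tilde\A$ directly and obtain the bound $|z_{j+1}-z_j|\leq|\diff\tilde\A(\zz)|+C$ from the boundedness of the $\partial f_j$ (which holds because the $f_j$ descend to the compact torus); telescoping from a representative with $z_0$ in a fundamental domain then yields both compactness of the critical set and the Palais--Smale condition. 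The two arguments are equivalent in substance --- your componentwise bound on the consecutive differences is precisely the concrete content of the paper's appeal to non-degeneracy of $Q$ --- but your version is a bit more elementary, sidestepping the introduction of the matrix $A$ and the abstract coercivity statement.
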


\begin{proof}
Following McDuff and Salamon~\cite[page~352]{McDuff_Salamon:Introduction_to_symplectic_topology}, let 
\[
\tau:(\R^{2d})^{\times k}/\Z^{2d}\toup^{\cong}\T^{2d}\times(\R^{2d})^{\times k-1}\] 
be the diffeomorphism given by
\[
\tau(z_0,...,z_{k-1})=(z_0,z_1-z_0,z_2-z_1,...,z_{k-1}-z_{k-2}).
\]
We have
\begin{align*}
\A\circ\tau^{-1}(z_0,\zzeta)
=
\langle A\cchi,\uupsilon\rangle 
+
B(z_0,\zzeta),\s\forall z_0\in\T^{2d},\ \zzeta=(\zeta_0,...,\zeta_{k-2})\in(\R^{2d})^{\times k-1},
\end{align*}
where $B$ is a function of the form 
\[B:(\T^{2d})^{\times k}\to\R,\] 
$A$ is the $d(k-1)\times d(k-1)$ matrix which can be written as an upper triangular matrix with the upper part filled by blocks of $d\times d$ identity matrices $I$, i.e.
\[
A=
\left[
  \begin{array}{cccc}
     I & I &  \cdots &   I\\ 
     0 & I & \ddots &   \vdots\\ 
     \vdots & \ddots & \ddots &   I\\ 
     0 & \cdots & 0 &   I\\ 
  \end{array}
\right],
\]
and we write each $\zeta_j$ in symplectic coordinates as $(\chi_j,\upsilon_j)$, so that  $\cchi:=(\chi_0,...,\chi_{k-2})$ and $\uupsilon:=(\upsilon_0,...,\upsilon_{k-2})$ are vectors in $(\R^d)^{\times k-1}$. 

Now, the quadratic form $Q(\zzeta):=\langle A\cchi,\uupsilon\rangle$ is non-degenerate, and therefore $|\diff Q(\zzeta)|\to\infty$ as $|\zzeta|\to\infty$. On the other hand the function $B$ has bounded $C^1$ norm, being defined on a compact space. This proves the proposition.
\end{proof}

The variational principle associated with the discrete symplectic action is available in any period $p\in\N$. Namely, contractible $p$-periodic points of $\phi$ (that is, contractible fixed points of the $p$-fold composition $\phi^p=\phi\circ...\circ\phi$) correspond to critical points of the discrete symplectic action $\An{p}:(\R^{2d})^{\times kp}/\Z^{2d}\to\R$ defined by
\begin{align}\label{e:symplectic_action_period_n}
\An{p} (\zz)
:=
\sum_{j\in\Z_{kp}}
\Big(
\langle
y_j,x_j-x_{j+1}
\rangle
+
f_{j\, \mathrm{mod}\, k}(x_{j+1},y_j)
\Big), 
\end{align}
where $\zz=(z_0,...,z_{kp-1})$ and $z_j=(x_j,y_j)$. We define the \textbf{average-action spectrum} of $\phi$ to be the union over $p\in\N$ of the sets of critical values of $\tfrac1p \An p$, i.e.
\begin{align*}
\left\{
\tfrac1p\An p(\zz)\ |\ p\in\N,\ \zz\in\mathrm{crit}(\An p)
\right\}.
\end{align*}
This set coincides with the set of average-actions of the periodic orbits of the Hamiltonian flow~$\phi_t$.

\subsection{The Morse indices}\label{s:Morse_indices}

Detecting contractible periodic points of $\phi$ is equivalent to detecting  critical points of the functions $\An p$ (for all $p\in\N$). However, if $\zz$ is a critical point of $\A$,  its product
\[ \zz\iter n:=(\underbrace{\big.\zz,...,\zz}_{\times n}  ) \]
is a critical point of $\An n$ for every $n\in\N$. This is another way of saying that a fixed point of $\phi$ is also a fixed point of any iterated composition of $\phi$ with itself. In order to estimate the number of contractible periodic points of $\phi$ one needs some criteria to establish whether a given critical point $\zz$ of $\An p$ is a ``genuine'' one or it is rather of the form $\zz=\ww\iter n$ for some $n< p$ and some critical point $\ww$ of $\An{p/n}$. In certain situations, this can be achieved by looking at the Morse indices. 

Let $\zz$ be a critical point of $\An p$. We denote by $\mor(\zz)$ and $\nul(\zz)$ the Morse index and the nullity of $\An{p}$ at the critical point $\zz$. We recall that these are nonnegative integers defined respectively as the dimension of the negative eigenspace  and of the kernel of the Hessian of $\An{p}$ at $\zz$. The nullity of critical points of the discrete symplectic action admits a dynamical characterization according to the following proposition. We recall that $\phi$ factorizes as the composition $\psi_{k-1}\circ\psi_{k-2}\circ...\circ\psi_0$.

\begin{prop}\label{p:nullity} 
A vector $\ZZ=(Z_0,...,Z_{kp-1})\in(\R^{2d})^{\times kp}$ belongs to the kernel of the Hessian of $\An p$ at a critical point $\zz=(z_0,...,z_{kp-1})$ if and only if 
\[Z_{j+1}=\diff\psi_{j\, \mathrm{mod}\, k}(z_j)Z_j,\s\s \forall j\in\Z_{kp}.\] 
In particular 
\begin{align}\label{e:nullity}
\nul(\zz)=\dim\ker(\diff\phi(z_0)^p-I). 
\end{align}
\end{prop}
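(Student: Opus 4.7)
The plan is to derive the kernel characterization by an explicit computation of the Hessian of $\An p$ at $\zz$ and to match it term by term against the linearization of the generating-function equations \eqref{e:generating_function}.

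Writing $z_j=(x_j,y_j)$ and differentiating \eqref{e:symplectic_action_period_n} once, the bilinear term $\langle y_j,x_j-x_{j+1}\rangle$ couples consecutive indices, and collecting the contributions yields at a critical point exactly the two relations of \eqref{e:generating_function} with index $j\bmod k$; this already reproves that critical points of $\An p$ correspond to contractible fixed points of $\phi^p$. Differentiating a second time, the Hessian applied to a tangent vector $\ZZ=(Z_0,\ldots,Z_{kp-1})$, with $Z_j=(X_j,Y_j)$, produces for each $j\in\Z_{kp}$ a pair of linear equations relating $(X_j,Y_j)$ to $(X_{j+1},Y_{j+1})$ whose only non-trivial coefficients are the $d\times d$ second-derivative blocks of $f_{j\bmod k}$ evaluated at $(x_{j+1},y_j)$.

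On the other hand, linearizing \eqref{e:generating_function} at $z_j$ directly gives the same linear system for $(X_{j+1},Y_{j+1})$ in terms of $(X_j,Y_j)$ with the same coefficients; solving it (the relevant diagonal block is close to $I$ because the $\psi_j$'s are $C^1$-close to the identity) produces the matrix of $\diff\psi_{j\bmod k}(z_j)$. A direct comparison therefore shows that $\ZZ$ lies in $\ker\mathrm{Hess}(\An p)(\zz)$ if and only if $Z_{j+1}=\diff\psi_{j\bmod k}(z_j)Z_j$ for every $j\in\Z_{kp}$, which is the first assertion.

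For \eqref{e:nullity}, such a linearized orbit is entirely determined by its initial value $Z_0\in\R^{2d}$, and the cyclic closure $Z_{kp}=Z_0$ becomes, by the chain rule applied to the factorization \eqref{e:factorization_phi} iterated $p$ times, the equation $\diff\phi(z_0)^pZ_0=Z_0$. The main point requiring care is the bookkeeping: the off-diagonal Hessian contributions coming from the pairing $\langle y_j,x_j-x_{j+1}\rangle$ must combine with the mixed partials $\partial^2_{xy}f_{j\bmod k}$ and with an implicit-function-style inversion in exactly the right way to reconstitute the matrix of $\diff\psi_{j\bmod k}(z_j)$; this is the only place where signs and index-shift conventions have to be tracked carefully.
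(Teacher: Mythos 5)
Your proposal follows essentially the same route as the paper: compute the Hessian of $\An p$ at $\zz$ explicitly, derive the matrix of $\diff\psi_{j\bmod k}(z_j)$ by implicitly differentiating the generating-function relations (using that $I-\partial_{xy}f_j$ is invertible since $\psi_j$ is $C^1$-close to the identity), match the two linear systems term by term, and then obtain \eqref{e:nullity} via the chain rule and the isomorphism $\ZZ\mapsto Z_0$ from $\ker\hess\An p(\zz)$ to $\ker(\diff\phi(z_0)^p-I)$. The argument is correct in all its steps; what the paper does additionally is carry out the bookkeeping you flag, writing out the entries of $\diff\psi_j$ and the Hessian in full so the comparison is immediate.
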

\begin{proof}
In order to simplify the notation, let us set $p=1$. If we write each $\psi_j$ as $(\chi_j,\upsilon_j)$, then by~\eqref{e:generating_function} the differential of $\psi_j$ can be written in matrix form as
\begin{align*}
\diff\psi_j
&=
\left[
  \begin{array}{cc}
    \partial_x\chi_j & \partial_y\chi_j \\ \noalign{\medskip}
    \partial_x\upsilon_j & \partial_y\upsilon_j  
  \end{array}
\right],
\end{align*}
and
\begin{align*}
 \partial_x\chi_j &= (I-\partial_{xy}f_j)^{-1}, \\
 \partial_y\chi_j &= (I-\partial_{xy}f_j)^{-1} \partial_{yy}f_j,\\
 \partial_x\upsilon_j &= -\partial_{xx}f_j (I-\partial_{xy}f_j)^{-1},\\
 \partial_y\upsilon_j &= I-\partial_{yx}f_j-\partial_{xx}f_j(I-\partial_{xy}f_j)^{-1}\partial_{yy}f_j.
\end{align*}
Given $Z_j=(X_j,Y_j)\in\R^{2d}$, the vector $Z_{j+1}=(X_{j+1},Y_{j+1}):=\diff\psi_{j}(z_j)Z_j$ is given by
\begin{small}
\begin{equation}\label{e:diff_psi}
\begin{split}
X_{j+1} &= (I-\partial_{xy}f_j)^{-1}( X_j +  \partial_{yy}f_j Y_j ),\\
Y_{j+1} &= -\partial_{xx}f_j (I-\partial_{xy}f_j)^{-1} X_j + (I-\partial_{yx}f_j-\partial_{xx}f_j(I-\partial_{xy}f_j)^{-1}\partial_{yy}f_j)Y_j, 
\end{split}
\end{equation} 
\end{small}%
where all the derivatives of $f_j$ are meant to be evaluated at the point $(x_{j+1},y_j)$. On the other hand, the Hessian of $\A$ at $\zz$ is given by
\begin{align*}
\hess\A(\zz)[\ZZ,\ZZ']
=
&
\sum_{j\in\Z_k}
\langle
X_j - (I-\partial_{xy}f_j) X_{j+1} + \partial_{yy}f_j Y_j
,
Y_j'
\rangle\\
&
+
\sum_{j\in\Z_k}
\langle
\partial_{xx}f_j X_{j+1} + Y_{j+1} - (I-\partial_{yx}f_j) Y_{j} 
,
X_{j+1}'
\rangle,
\end{align*}
for every $\ZZ=(Z_0,...,Z_{k-1})$ and $\ZZ'=(Z_0',...,Z_{k-1}')$, with $Z_j=(X_j,Y_j)$ and $Z_j'=(X_j',Y_j')$. Therefore, $\ZZ$ belongs to the kernel of the Hessian of $\A$ at $\zz$ if and only if~\eqref{e:diff_psi} holds for every $j\in\Z_k$. Notice that, for such $\ZZ=(Z_0,...,Z_{k-1})$, we have
\[
\diff\phi(z_0)Z_0=\diff\psi_{k-1}(z_{k-1})\diff\psi_{k-2}(z_{k-2})...\diff\psi_0(z_0)Z_0=Z_0.
\]
Hence, the map $\ZZ\mapsto Z_0$ is a linear isomorphism from the kernel of $\hess\A(\zz)$ to the kernel of $(\diff\phi(z_0)-I)$, and~\eqref{e:nullity} follows.
\end{proof}

In order to employ the Morse indices for distinguishing among critical points, it is useful to know how the sequences $\{\mor(\zz\iter n)\,|\,n\in\N\}$ and $\{\nul(\zz\iter n)\,|\,n\in\N\}$ behave. All we need to know about the nullity is described by  the following proposition. We recall that the \textbf{Floquet multipliers} of $\phi^p$ at a fixed point $z_0$ are defined as the eigenvalues of $\diff\phi(z_0)^p$.
\begin{prop}\label{p:same_nullity}
Let $\zz$ be a critical point of $\An p:(\R^{2d})^{\times kp}/\Z^{2d}\to\R$. For every integer $n>1$, we have that $\nul(\zz)=\nul(\zz\iter n)$ if and only if none of the Floquet multipliers of $\phi^{p}$ at $z_0$ other than 1 are  $n$-th complex roots of $1$.\hfill\qed
\end{prop}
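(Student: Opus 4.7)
The plan is to reduce the statement to a purely linear-algebraic fact about the matrix $M := \diff\phi(z_0)^p$, using the characterization of the nullity proved in the previous proposition.

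First I would apply Proposition~\ref{p:nullity} (to both the critical point $\zz$ of $\An p$ and the critical point $\zz\iter n$ of $\An{np}$) to get the identities
\[
\nul(\zz) = \dim\ker(M - I),
\qquad
\nul(\zz\iter n) = \dim\ker(M^n - I).
\]
Since $\ker(M - I)\subseteq\ker(M^n - I)$ always, the statement reduces to showing that these two kernels coincide if and only if no eigenvalue of $M$ other than $1$ is an $n$-th root of unity.

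The main step is to decompose $\ker(M^n - I)$. Working over $\C$, the factorization $x^n - 1 = \prod_{\lambda^n = 1}(x - \lambda)$ has pairwise distinct roots and pairwise coprime linear factors. A standard application of Bezout's identity to polynomials in $M$ then gives
\[
\ker_{\C}(M^n - I) = \bigoplus_{\lambda^n = 1} \ker_{\C}(M - \lambda I).
\]
From this decomposition the equivalence is immediate: the dimension of $\ker_{\C}(M^n - I)$ exceeds the dimension of $\ker_{\C}(M - I)$ precisely when at least one of the summands indexed by $\lambda \neq 1$ with $\lambda^n = 1$ is nontrivial, i.e.\ precisely when $M$ has some eigenvalue other than $1$ that is an $n$-th root of unity.

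The only minor point to address is that the nullity in the proposition is defined using real kernels. Since $M$ is a real matrix, complex conjugation preserves both $\ker_{\C}(M-I)$ and $\ker_{\C}(M^n-I)$, so their complex dimensions equal the real dimensions of the corresponding real kernels, and the equivalence transfers back to $\nul(\zz)$ and $\nul(\zz\iter n)$. No substantial obstacle is expected: the argument is essentially the observation that $M$ and $M^n$ share exactly the same $1$-eigenspace iff the characteristic polynomial of $M$ has no nontrivial root in common with $(x^n-1)/(x-1)$.
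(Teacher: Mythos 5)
Your proof is correct and follows essentially the same route as the paper's: both reduce to Proposition~\ref{p:nullity} and then use the decomposition $\ker_\C(M^n-I)=\bigoplus_{\lambda^n=1}\ker_\C(M-\lambda I)$. The only difference is that the paper cites this decomposition from Long's book (Theorem~9.2.1), whereas you prove it directly from the pairwise coprimality of the factors of $x^n-1$, which makes the argument self-contained at no real cost; your remark reconciling real and complex dimensions (conjugation-invariance of the kernels of a real matrix) is the right way to close the gap that the paper leaves implicit.
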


\begin{proof}
By applying Long's version of Bott's formulas for the geometric multiplicity of eigenvectors of  symplectic matrices, we infer
\begin{align*}
\dim\ker(\diff\phi(z_0)^{pn}-I)=\sum_{\lambda\in\sqrt[n]1} \dim_\C\ker_\C(\diff\phi(z_0)^{p}-\lambda I),
\end{align*}
see \cite[Theorem~9.2.1]{Long:Index_theory_for_symplectic_paths_with_applications}. Hence, no Floquet multiplier of $\phi^{p}$ at $z_0$ other than 1 is  an $n$-th complex root of $1$ if and only if the above equation reduces to
\begin{align*}
\dim\ker(\diff\phi(z_0)^{pn}-I)= \dim_\C\ker_\C(\diff\phi(z_0)^{p}-I) = \dim\ker(\diff\phi(z_0)^{p}-I).
\end{align*}
By Proposition~\ref{p:nullity} our statement follows.
\end{proof}

As for the Morse index, one may directly develop an analog of Bott's iteration theory \cite{Bott:On_the_iteration_of_closed_geodesics_and_the_Sturm_intersection_theory}. However, we prefer to proceed indirectly by relying on the relation between the Morse index of the discrete symplectic action and the Maslov index of the periodic points of $\phi$, and then by using Long's extensions of Bott's theory for the Maslov index.

\subsection{The Maslov index}\label{s:Maslov_index}
The symplectic group $\Sp(2d)$ has an infinite cyclic fundamental group. The Maslov index is an integer $\mas(\Gamma)$ which is assigned to elements $\Gamma$ of the universal cover $\widetilde{\Sp}(2d)$. Here, we regard an element of $\widetilde{\Sp}(2d)$ covering $A\in\Sp(2d)$ as a homotopy class of continuous paths $\Gamma:[0,1]\to\Sp(2d)$ joining the identity $I$ with $A$. The integer $\mas(\Gamma)$ is roughly the number of half-windings made by the path $\Gamma$ in the symplectic group. In the following we briefly recall its precise definition, and we refer the reader to the books \cite{Abbondandolo:Morse_theory_for_Hamiltonian_systems, Long:Index_theory_for_symplectic_paths_with_applications} for a detailed account.

Let $r:\Sp(2d)\to U(d)$ be the retraction that sends any $A\in\Sp(2d)$ to the unitary complex matrix obtained from $(AA^T)^{-1/2}A$ after identifying $\R^{2d}$ with $\C^{d}$. By composing this retraction with the determinant we obtain the so-called rotation function 
\[\rho:=\det\circ\, r:\Sp(2d)\to S^1\subset\C.\] 
Given $\Gamma\in\widetilde\Sp(2d)$, let $\theta:[0,1]\to\R$ be a continuous function such that $e^{i2\pi\theta(t)}=\rho(\Gamma(t))$ for all $t\in[0,1]$. We define the \textbf{average Maslov index} $\avmas(\Gamma)$ by 
\[\avmas(\Gamma):=2(\theta(1)-\theta(0)).\] 
The reason for this terminology lies in the property described later in~\eqref{e:iteration_long}. Notice that the average Maslov index is not necessarily an integer. 

Now, we denote by $\Sp^*(2d)\subset\Sp(2d)$ the subset of those symplectic matrices that do not have $1$ as an eigenvalue. Its complement $\Sp^0(2d):=\Sp(2d)\setminus\Sp^*(2d)$ is a singular hypersurface in the symplectic group that separates $\Sp^*(2d)$ in two connected components. Now, consider the $2d\times2d$ symplectic diagonal matrices $W':=\diag(2,1/2,-1,-1,...,-1)$ and $W'':=-I=\diag(-1,-1,...,-1)$. These two matrices belong to different connected components of $\Sp^*(2d)$, and we have $\rho(W')=(-1)^{d-1}=-\rho(W'')$. Given $\Gamma\in\widetilde\Sp(2d)$ with $\Gamma(1)\in\Sp^*(2d)$, let us choose an arbitrary continuous path $\Gamma':[0,1]\to\Sp^*(2d)$  joining $\Gamma(1)$ with either $W'$ or $W''$. We denote by $\Gamma*\Gamma'\in\widetilde\Sp(2d)$ the path obtained by concatenating  $\Gamma$ and $\Gamma'$. Then, the \textbf{Maslov index} of $\Gamma$ is defined as
\[
\mas(\Gamma):=\avmas(\Gamma*\Gamma').
\]
The function $\mas:\widetilde\Sp(2d)\to\Z$ is locally constant on the set of those $\Gamma\in\widetilde\Sp(2d)$ with $\Gamma(1)\in\Sp^*(2d)$, and it cannot be continuously extended to the whole $\widetilde\Sp(2d)$. We extend it as a lower semi-continuous function by setting\footnote{For other applications, different extensions may be more suitable. For instance, Robbin and Salamon consider in~\cite{Robbin_Salamon:The_Maslov_index_for_paths} the average between the maximal lower semi-continuous and the minimal upper semi-continuous extensions.}
\begin{align}\label{e:Maslov_degenerate}
\mas(\Gamma):=\liminf_{
\scriptsize
  \begin{array}{c}
    \Psi\to\Gamma \\ 
    \Psi(1)\!\in\!\Sp^*\!(2d) \\ 
  \end{array}
}\mas(\Psi).
\end{align}

Now, for each $n\in\N$, we denote by $\Gamma\iter n\in\widetilde{\Sp}(2d)$ the $n$-fold product of $\Gamma$ in the universal cover group $\widetilde{\Sp}(2d)$, i.e.
\[
\Gamma\iter n(\sfrac{j+t}{n})=\Gamma(t)\underbrace{\Gamma(1)...\Gamma(1)}_{\times j},
\s\s\forall j\in\{0,...,n-1\},\ t\in[0,1].
\]
The Maslov index $\mas(\Gamma\iter n)$ grows according to the following iteration formula established by Liu and Long \cite{Liu_Long:An_optimal_increasing_estimate_of_the_iterated_Maslov-type_indices, Liu_Long:Iteration_inequalities_of_the_Maslov-type_index_theory_with_applications}.
\begin{align}\label{e:iteration_long}
 n\,\avmas(\Gamma)-d\leq \mas(\Gamma\iter n)\leq n\,\avmas(\Gamma)+d-\dim\ker(\Gamma\iter n(1)-I).
\end{align}
Moreover, if these inequalities are not both strict, then $1$ is the only eigenvalue of $\Gamma\iter n(1)$. If $1$ is the only eigenvalue of $\Gamma(1)$ and $\avmas(\Gamma)=0$, then $\mas(\Gamma)=\mas(\Gamma\iter n)$ for all $n\in\N$. We refer the reader to \cite[chapters 9,10]{Long:Index_theory_for_symplectic_paths_with_applications} for a proof of these (and many others) iteration properties.

Now, let us consider again the Hamiltonian diffeomorphism $\phi:\T^{2d}\to\T^{2d}$ and a contractible $p$-periodic point $z_0$ of it. Let $\phi_t$ be a Hamiltonian flow  whose time-1 map is $\phi$ and that is generated by a Hamiltonian that is 1-periodic in time. Consider the path $Z:[0,\infty)\to\Sp(2d)$ given by $Z(t)=\diff\phi_t(z_0)$. Let $\zz$ be the critical point of $\An p$ corresponding to the contractible fixed point $z_0$. We define the \textbf{average Maslov index} of $\zz$ and, for every period $n\in\N$, the \textbf{Maslov index} of $\zz\iter n$ as the integers
\begin{align*}
\avmas(\zz)&:=\avmas(Z|_{[0,p]}),\\
\mas(\zz\iter n)&:=\mas(Z|_{[0,pn]}),
\end{align*}
where the restrictions $Z|_{[0,p]}$ and $Z|_{[0,pn]}$ are seen as elements of $\widetilde{\Sp}(2d)$. It turns out that the Maslov index of $\zz$ depends only on $z_0$ and $\phi$, and not on the specific choice of the Hamiltonian flow (this is indeed true for all closed symplectically aspherical manifolds, see \cite[page~440]{Schwarz:On_the_action_spectrum_for_closed_symplectically_aspherical_manifolds}). By equation~\eqref{e:Hamiltonian_periodic}, we have that $Z(t+1)=Z(t)Z(1)$. The following proposition summarizes  the above-mentioned iteration properties of the Maslov index.
\begin{prop}\label{p:iteration_properties_Long}
Let $\zz=(z_0,...,z_{kp-1})$ be a critical point of $\An p$. 
\begin{itemize}
\item[(i)] For all $n\in\N$ we have
\begin{align*}
 n\,\avmas(\zz)-d\leq \mas(\zz\iter n)\leq n\,\avmas(\zz)+d-\nul(\zz\iter n).
\end{align*}
Moreover,  if these inequalities are not both strict, then $1$ is the only Floquet multiplier of $\phi^{pn}$ at $z_0$. 

\item[(ii)] If $1$ is the only Floquet multiplier of $\phi^{p}$ at $z_0$ and $\avmas(\zz)=0$, then $\mas(\zz)=\mas(\zz\iter n)$ for all $n\in\N$. \hfill\qed
\end{itemize}
\end{prop}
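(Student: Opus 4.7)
The plan is to reduce both assertions to the Liu--Long iteration formula~\eqref{e:iteration_long} applied to the path $Z|_{[0,p]}\in\widetilde{\Sp}(2d)$, which by definition of the Maslov data of $\zz$ already encodes $\avmas(\zz)$ and $\mas(\zz)$.

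First I would set $\Gamma:=Z|_{[0,p]}$, so that $\avmas(\zz)=\avmas(\Gamma)$. The key preliminary step is to check that, as elements of $\widetilde{\Sp}(2d)$, the reparametrization of $Z|_{[0,pn]}$ coincides with the $n$-fold product $\Gamma\iter n$ defined in Section~\ref{s:Maslov_index}. From the $1$-periodicity condition~\eqref{e:Hamiltonian_periodic} we have $Z(t+1)=Z(t)Z(1)$, which iterates to $Z(t+m)=Z(t)Z(1)^m$ for every integer $m\geq 0$; in particular $Z(p)=Z(1)^p$, so that for $t\in[0,1]$ and $j\in\{0,\ldots,n-1\}$,
\[
Z(pt+pj)=Z(pt)\,Z(1)^{pj}=Z(pt)\,Z(p)^{j}=\Gamma(t)\,\Gamma(1)^{j}.
\]
After the affine reparametrization $s=(j+t)/n$ this is precisely the recipe defining $\Gamma\iter n(s)$, so $\mas(\zz\iter n)=\mas(\Gamma\iter n)$. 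Moreover, Proposition~\ref{p:nullity} identifies $\nul(\zz\iter n)=\dim\ker(\diff\phi(z_0)^{pn}-I)=\dim\ker(\Gamma\iter n(1)-I)$.

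With these identifications in hand, assertion (i) is a direct rewrite of the two-sided bound in~\eqref{e:iteration_long}, together with the accompanying statement (also recorded just after~\eqref{e:iteration_long}) that if either inequality is an equality, then $1$ must be the only eigenvalue of $\Gamma\iter n(1)=\diff\phi(z_0)^{pn}$, i.e.\ the only Floquet multiplier of $\phi^{pn}$ at $z_0$. Assertion (ii) is just the specialization of~\eqref{e:iteration_long} to the degenerate case quoted in Section~\ref{s:Maslov_index}: when $\Gamma(1)=\diff\phi(z_0)^p$ has $1$ as its sole eigenvalue and $\avmas(\Gamma)=0$, one has $\mas(\Gamma)=\mas(\Gamma\iter n)$ for all $n\in\N$.

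I do not expect any genuine obstacle, since the substantive content of both assertions is already packaged in the Liu--Long inequalities and in Proposition~\ref{p:nullity}; the only point that needs to be handled carefully is the bookkeeping in $\widetilde{\Sp}(2d)$, namely verifying that concatenating $n$ copies of $\Gamma$ in the universal cover matches the linearization of $\phi_t$ along the full orbit on $[0,pn]$, which is precisely where the semigroup-like identity $Z(p)^j=Z(1)^{pj}$ coming from~\eqref{e:Hamiltonian_periodic} is used.
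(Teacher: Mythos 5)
Your proposal is correct and follows essentially the same route the paper takes: the statement is a repackaging of the Liu--Long iteration formula~\eqref{e:iteration_long} once one identifies $Z|_{[0,pn]}$ with $\Gamma\iter n$ for $\Gamma=Z|_{[0,p]}$ and invokes Proposition~\ref{p:nullity} to replace $\dim\ker(\Gamma\iter n(1)-I)$ by $\nul(\zz\iter n)$; the paper simply cites Long's book after recording the cocycle relation and leaves this bookkeeping implicit, whereas you spell it out. One small caveat worth noting: the identity $Z(t+1)=Z(t)Z(1)$ (stated in the paper and used as your first step) is literally correct only when $z_0$ is a fixed point of $\phi$, since $\diff\phi_{t+1}(z_0)=\diff\phi_t(\phi_1(z_0))\,\diff\phi_1(z_0)$ and $\phi_1(z_0)\neq z_0$ in general for $p>1$; the identity you actually need, $Z(pj+pt)=Z(pt)Z(p)^j$, does hold, but its proof should go through $\phi_{pj}(z_0)=z_0$ (the $p$-periodicity of $z_0$) rather than through $Z(1)^{pj}$.
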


The Maslov index is related to the Morse index of the discrete symplectic action according to the following statement, which may be regarded as a symplectic analog of the Morse index Theorem.

\begin{prop}[Symplectic Morse index Theorem]
\label{p:Symplectic_Morse_index_Theorem}
For every critical point $\zz$ of $\An {p}$ we have
\begin{align}\label{e:Maslov_Morse}
\mor(\zz)=\mas(\zz) + \underbrace{\sfrac12\dim(\mathrm{domain}(\An {p}))}_{=dkp}.
\end{align}
\end{prop}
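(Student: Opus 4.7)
I reduce first to $p = 1$: the functional $\An p$ has the same form as $\A$ applied to the $kp$-step factorization of $\phi^p$ obtained by $p$-fold repetition of~\eqref{e:factorization_phi}, so it suffices to prove $\mor(\zz) = \mas(\zz) + dk$ for critical points $\zz$ of $\A$.

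The core of the argument is a spectral-flow comparison. I choose a smooth homotopy $\{\phi_t^s\}_{s\in[0,1]}$ of Hamiltonian flows joining the given flow ($s=1$) to a chosen baseline flow ($s=0$), arranging that $z_0$ remains a fixed point of $\phi_1^s$ throughout and that the induced generating functions $f_j^s$ stay small enough for the factorization machinery to apply. A critical point $\zz^s$ of the corresponding discrete action $\A^s$ then varies smoothly with $s$. Let $Z^s(t) := \diff\phi_t^s(z_0) \in \widetilde{\Sp}(2d)$. By Proposition~\ref{p:nullity}, $\hess\A^s(\zz^s)$ is degenerate precisely when $Z^s(1) \notin \Sp^*(2d)$, and off such parameters both $\mor(\zz^s)$ and $\mas(\zz^s)$ are locally constant.

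At a crossing parameter $s_*$ both indices may jump. On the Maslov side, Robbin-Salamon \cite{Robbin_Salamon:The_Maslov_index_for_paths} express the jump as half the signature of a crossing form $\Gamma^{RS}$ on $\ker(Z^{s_*}(1) - I)$. On the Morse side, standard spectral-flow theory expresses the jump as minus the signature of the crossing form $\Gamma^M$ obtained by restricting $\partial_s \hess\A^s(\zz^s)|_{s=s_*}$ to $\ker\hess\A^{s_*}(\zz^{s_*})$. Using the linear isomorphism $\ZZ\mapsto Z_0$ from Proposition~\ref{p:nullity} to identify the two kernels, a direct computation starting from the block expression of the Hessian derived in the proof of Proposition~\ref{p:nullity}, combined with the standard formula $\partial_s Z^s(t)|_{s=s_*} = Z^{s_*}(t)\int_0^t Z^{s_*}(\tau)^{-1} J \,\partial_s B^s(\tau)\, Z^{s_*}(\tau)\,\diff\tau$ (where $B^s(\tau)$ is the Hessian of the Hamiltonian at $z_0$ at time $\tau$), should show that $\Gamma^M = -2\Gamma^{RS}$. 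This proves that $\mor(\zz^s) - \mas(\zz^s)$ is constant in $s$.

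Finally, to fix the additive constant $dk$, I compute both sides in a single explicit baseline: taking $H_t^0(z) = \tfrac{\varepsilon}{2}\langle Bz,z\rangle$ (glued to a small bump on $\T^{2d}$) with $B$ negative definite and $\varepsilon > 0$ small enough that $Z^0(1) = e^{\varepsilon JB} \in \Sp^*(2d)$, the Maslov index of the path $t \mapsto e^{t\varepsilon JB}$ is computed directly from the rotation function, while the Hessian of $\A^0$ at the critical point $\zz^0 = (0,\dots,0)$ is an explicit symmetric $2dk \times 2dk$ matrix — a $C^2$-small perturbation of the ``identity generating family'' quadratic form $\sum_{j \in \Z_k}\langle Y_j, X_j - X_{j+1}\rangle$ — whose signature can be computed by hand. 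The main obstacle is the crossing-form identification in the previous paragraph: establishing $\Gamma^M = -2\Gamma^{RS}$ with the correct sign and factor is a careful piece of linear algebra translating the deformation of the generating functions $f_j^s$ into the infinitesimal variation of the symplectic path $Z^s$; every other step of the argument is routine.
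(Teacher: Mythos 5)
Your proposal takes a genuinely different route from the paper, and it leaves two real gaps.

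The paper's proof of Proposition~\ref{p:Symplectic_Morse_index_Theorem} is short: for a \emph{non-degenerate} critical point $\zz$, the identity $\mor(\zz)=\mas(\zz)+dkp$ is exactly \cite[Theorem~4.1]{Robbin_Salamon:Phase_functions_and_path_integrals}, cited verbatim; then for a degenerate $\zz$ the paper observes that both sides of~\eqref{e:Maslov_Morse} are, by construction, the \emph{maximal lower semi-continuous extensions} of locally constant functions on the non-degenerate locus --- the Morse index on symmetric bilinear forms, the Maslov index on $\widetilde{\Sp}(2d)$ via~\eqref{e:Maslov_degenerate} --- so the identity persists. What you propose instead is essentially a re-derivation of the Robbin--Salamon theorem via spectral flow of the Hessian along a homotopy of generating families. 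That is a legitimate and instructive route (it is close to how Robbin and Salamon prove their result and how related index theorems are proved in \cite{Long:Index_theory_for_symplectic_paths_with_applications}), but it is a much longer proof, and as written it is incomplete in two places.

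First, the entire burden of the argument rests on the crossing-form identity $\Gamma^M = -2\Gamma^{RS}$ (or whatever the correct sign and normalization turn out to be), which you state but do not derive. This is not a routine bookkeeping step: it is the analogue of the hardest part of \cite[Theorem~4.1]{Robbin_Salamon:Phase_functions_and_path_integrals}, and the factor and sign must be pinned down against the specific Maslov-index convention of the paper (the lower semi-continuous one of~\eqref{e:Maslov_Degenerate}, not the Robbin--Salamon averaged convention of \cite{Robbin_Salamon:The_Maslov_index_for_paths}). Without carrying out that linear algebra the proof is a plan, not a proof. There is also a hidden issue with the homotopy: the discretization step $k$ determines the dimension of the domain of $\A^s$, and hence the additive constant $dk$; you must arrange the homotopy so that a single $k$ works for all $s\in[0,1]$, otherwise the quantity $\mor(\zz^s)-dk$ you are trying to show is constant has a domain that changes along the way.

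Second, the spectral-flow argument, even if completed, proves~\eqref{e:Maslov_Morse} only when $\zz$ is a non-degenerate critical point: crossing forms control jumps \emph{between} regular parameters, and if the given $\zz$ is degenerate then $s=1$ is itself a crossing parameter where the value of both indices depends on the extension convention. The proposition is stated for \emph{every} critical point, and your proposal never addresses how the equality extends to degenerate ones. This is precisely what the second paragraph of the paper's proof supplies, in one sentence: both sides are the maximal lower semi-continuous extensions of the identity on the open dense non-degenerate locus, and lower semi-continuous extensions of equal functions are equal. You would need to add that observation (or an equivalent limiting argument) to close your proof.

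In short: the approach is viable and would yield a self-contained proof not relying on Robbin--Salamon, but the crossing-form computation --- the actual content --- is missing, and the degenerate case is not treated. Given that the paper handles the whole statement by citation plus a semi-continuity remark, you should weigh whether re-proving Robbin--Salamon is the efficient route here.
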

\begin{proof}
Assume first that the critical point $\zz$ of $\An{p}$ is non-degenerate. By Proposition~\ref{p:nullity} this is equivalent to the fact that $z_0$ is a non-degenerate fixed point of the $p$-fold composition $\phi\circ...\circ\phi$. In this case, equation~\eqref{e:Maslov_Morse} was proved by Robbin and Salamon in \cite[Theorem~4.1]{Robbin_Salamon:Phase_functions_and_path_integrals}. 

Now, consider the Morse index as a  locally constant function on the space of non-degenerate symmetric bilinear forms. Its extension to the whole space of symmetric bilinear forms is the maximal lower semi-continuous one. In the same way, in equation~\eqref{e:Maslov_degenerate} we defined the Maslov index on the whole $\widetilde{\Sp}(2d)$ as the maximal lower semi-continuous extension of the Maslov index for paths with final endpoint in $\Sp^*(2d)$ (i.e.\ the ``non-degenerate'' paths). Hence equation \eqref{e:Maslov_Morse} still holds when $\zz$ is a degenerate critical point of $\An p$.
\end{proof}

\subsection{Local homology}\label{s:local_homology} 
Let $M$ be a smooth manifold, and $p$ an isolated  critical point of a smooth function $f:M\to\R$. If $p$ is non-degenerate, by the Morse Lemma~\cite[page~6]{Milnor:Morse_theory} we know that on a neighborhood of $p$ the function $f$ looks like a non-degenerate quadratic form whose negative eigenspace has dimension equal to the Morse index $\mor(p)$. If $p$ is degenerate, i.e.\ its nullity $\nul(p)$ is non-zero, then the behavior of $f$ around $p$ can be wilder, as described by the Generalized Morse Lemma~\cite[Lemma~1]{Gromoll_Meyer:On_differentiable_functions_with_isolated_critical_points}. Some properties of the critical point $p$ are captured by its \textbf{local homology}, which is defined as the graded group
\[
\Loc_*(p):=\Hom_*(\{f<c\}\cup\{p\},\{f<c\}),
\]
where $c=f(p)$ and $\Hom_*$ denotes the singular homology functor. For our purpose, it is enough to consider  homology with coefficients in the field $\Z_2$. We refer the reader to \cite[Chapter~1]{Chang:Infinite_dimensional_Morse_theory_and_multiple_solution_problems} for the main properties of local homology groups. Here, we wish to recall that $\Loc_j(p)$ is always trivial in degrees $j<\mor(p)$ and $j>\mor(p)+\nul(p)$. Moreover, if a strip $\{c_1\leq f\leq c_2\}$ is ``sufficiently compact'' (for instance if the  Palais-Smale condition holds with respect to a complete Riemannian metric) and contains only isolated critical points of $f$, then we have the generalized Morse inequality
\begin{align}\label{e:Morse_inequality}
\rank \Hom_j(\{f<c_2\},\{f<c_1\})
\leq
\sum_p
\rank \Loc_j(p), 
\end{align}
where the above sum runs over all the critical points $p$ of $f$ with $c_1\leq f(p)<c_2$. 

In the next section, we will need the following properties of local homology groups. We divide them into two statements: the first one is local whereas the second one is global.

\begin{prop}\label{p:maximal_degree_local_homology}
Assume that an isolated critical point $p$ of a smooth function $f:M\to\R$ has non-trivial local homology (with coefficients in $\Z_2$) in maximal degree $k=\mor(p)+\nul(p)$.
Then the following properties hold.
\begin{itemize}
\item[(i)] The local homology of $p$ is concentrated in degree $k$ and it is isomorphic to the coefficient field, i.e. $\Loc_*(p)=\Loc_{k}(p) \cong \Z_2$.

\item[(ii)] For every smoothly embedded $k$-dimensional disc $D\subset M$ containing  $p$ in its interior and such that $f|_{D\setminus\{p\}}<f(p)$, the group $\Loc_*(p)$ is generated by $[D]$.

\item[(iii)] Let $\theta_t$ be the anti-gradient flow of $f$ with respect to any Riemannian metric. There exist arbitrarily small open neighborhoods $U\subset M$ of $p$ such that 
\begin{itemize}
\item[$\bullet$] $U$ has the mean-value property: if $\theta_{t_1}(p')$ and $\theta_{t_2}(p')$ belong to $U$ for some $p'\in\R^N$ and $t_1<t_2$, then $\theta_{t}(p')$ belongs to $U$ for all $t\in[t_1,t_2]$;

\item[$\bullet$] $\partial U$ is the union of three smooth compact manifolds with boundary:  the entry set
\begin{align*}
U_{\entry}&=
\{
p'\in\overline U\ |\ \theta_t(p')\not\in \overline U\ \ \forall t<0
\}
\end{align*}
that is contained in a superlevel set $\{f\geq f(p)+\delta\}$ for some $\delta>0$, the exit set
\begin{align*}
U_{\exit}
&=
\{
p'\in\overline U\ |\ \theta_t(p')\not\in \overline U\ \ \forall t>0
\}
\end{align*}
that is contained in the sublevel set $\{f< f(p)\}$, and $U_{\tan}$ that is tangent to the anti-gradient flow $\theta_t$;

\item[$\bullet$] the inclusion induces a homology isomorphism
\[
\Loc_*(p)\toup^{\cong}\Hom_*(\{f<f(p)\}\cup U,\{f<f(p)\}).
\]
\end{itemize}
\end{itemize}
\end{prop}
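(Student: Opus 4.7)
My approach is to apply the Gromoll--Meyer Splitting Lemma and the associated Shifting Theorem in order to reduce all three statements to the study of $f$ along its $\nul(p)$-dimensional characteristic submanifold. In a suitable chart around $p$ one obtains local coordinates $(x_-,x_+,y)$, with $x_-\in\R^{\mor(p)}$, $x_+\in\R^{\dim M-\mor(p)-\nul(p)}$ and $y\in\R^{\nul(p)}$, together with a smooth function $h\colon(\R^{\nul(p)},0)\to(\R,0)$ having $0$ as an isolated critical point, such that
\[
f(x_-,x_+,y)-f(p)=-|x_-|^2+|x_+|^2+h(y).
\]
The Shifting Theorem then supplies an isomorphism $\Loc_j(p)\cong\Loc_{j-\mor(p)}(0)$ for every $j$, where the right-hand side denotes the local homology at $0$ of the function $h$.

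For (i), the hypothesis $\Loc_k(p)\neq 0$ with $k=\mor(p)+\nul(p)$ becomes $\Loc_{\nul(p)}(0)\neq 0$. A standard fact of local Morse theory is that an isolated critical point of a smooth function on an $m$-dimensional manifold carries non-trivial top-degree local homology (with $\Z_2$ coefficients) if and only if it is a strict local maximum; in that case the local homology is $\Z_2$ concentrated in degree $m$. Applying this to $h$ and transporting back through the Shifting Theorem gives $\Loc_*(p)=\Loc_k(p)\cong\Z_2$.

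For (ii), the tangent space $T_pD$ is $k$-dimensional and must lie in the non-positive directions of the Hessian of $f$ at $p$; thus $T_pD\subset E^-\oplus E^0=\{x_+=0\}$, and by a dimension count $T_pD=E^-\oplus E^0$. Locally $D$ is therefore the graph $\{x_+=\varphi(x_-,y)\}$ of some smooth $\varphi$ with $\varphi(0,0)=0$ and $\diff\varphi(0,0)=0$. The linear homotopy $D_s:=\{x_+=s\varphi(x_-,y)\}$, $s\in[0,1]$, joins $D=D_1$ to the flat disc $D_0=E^-\times C$, where $C=\{x_-=0,\,x_+=0\}$ is the characteristic submanifold. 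Since $|s\varphi|^2\leq|\varphi|^2$ for $s\in[0,1]$ and $h(y)\leq h(0)$ near $0$ (by (i)), the inequality $f<f(p)$ on $D\setminus\{p\}$ persists on every $D_s\setminus\{p\}$, and each $D_s$ is therefore a valid relative cycle. Hence $[D]=[D_0]$ in $\Loc_k(p)$; under the Shifting Theorem, $[D_0]$ corresponds to the product of the relative fundamental class on $E^-$ with the local-maximum generator on $C$, and therefore generates $\Loc_k(p)\cong\Z_2$.

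Part (iii) is the standard construction of a Gromoll--Meyer pair in the splitting coordinates, as in \cite{Gromoll_Meyer:On_differentiable_functions_with_isolated_critical_points} or \cite[Ch.~1]{Chang:Infinite_dimensional_Morse_theory_and_multiple_solution_problems}. I would take $U$ to be the sublevel set of an auxiliary Lyapunov function, something of the form $g(x_-,x_+,y)=|x_-|^2+(h(0)-h(y))+\varepsilon\,\chi(|x_+|^2)$ with $\chi$ a suitable bump, designed so that $g$ strictly increases along the anti-gradient flow off the critical point. Tuning the two thresholds yields $U_{\exit}\subset\{f<f(p)\}$ and $U_{\entry}\subset\{f\ge f(p)+\delta\}$ simultaneously, while smoothing the corner where the two transverse pieces meet produces the flow-tangent piece $U_{\tan}$. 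The identification of $\Loc_*(p)$ with $\Hom_*(\{f<f(p)\}\cup U,\{f<f(p)\})$ then follows from standard deformation-retract arguments along the anti-gradient flow. The only substantive technical obstacle in the whole proof lies here: balancing the constants so that the entry and exit conditions hold simultaneously forces the Lyapunov function to absorb the drift of $h(y)$ along the null direction, and smoothing $U_{\tan}$ into a manifold tangent to the flow requires additional care. Parts (i) and (ii) are essentially routine once the Splitting and Shifting machinery is in place.
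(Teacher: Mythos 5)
Parts (i) and (ii) follow essentially the same route as the paper: Gromoll--Meyer splitting, the Shifting Theorem, reduction to the null direction where nontrivial top-degree local homology forces an isolated local maximum, and for (ii) a linear homotopy of the graph $\{x_+=\varphi\}$ onto the flat disc $\{x_+=0\}$ using $s^2|\varphi|^2\leq|\varphi|^2$. One small slip in (ii): the claim that $\Tan_pD\subset E^-\oplus E^0$ (equivalently $\diff\varphi(0,0)=0$) does not follow from the second-derivative test and is false in general --- e.g.\ $f=-x_-^2+x_+^2$ admits $D=\{x_+=ax_-\}$ with $0<|a|<1$, on which $f<0$ away from the origin but $\Tan_0D\neq E^-$. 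The correct and sufficient conclusion is only $\Tan_pD\cap E^+=\{0\}$ (transversality to $E^+$), which your second-derivative computation does give and which is all the paper uses; the rest of your homotopy argument is unaffected since it never invokes $\diff\varphi(0,0)=0$.

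Part (iii) is where the proposal genuinely falls short. As you acknowledge, it is a sketch, but the sketch is also internally inconsistent: a sublevel set $\{g<\delta\}$ of a smooth Lyapunov function has a single smooth boundary hypersurface, so it cannot by itself produce the required decomposition of $\partial U$ into entry, exit, and flow-tangent pieces; while ``smoothing the corner where two transverse pieces meet'' is a different construction (a box-type neighborhood) and, in any case, smoothing a corner does not produce a piece that is \emph{tangent} to the anti-gradient flow. Moreover, the specific $g$ you propose does not obviously increase along the anti-gradient flow: with $\nabla f=(\nabla f_0,-2x_-,2x_+)$ in the split coordinates, the term $\varepsilon\chi(|x_+|^2)$ contributes $-4\varepsilon\chi'(|x_+|^2)|x_+|^2$ to $\langle\nabla g,-\nabla f\rangle$, whose sign conflicts with monotonicity for at least one choice of sign of $\chi'$; and if $g$ truly increases along the flow, then trajectories only ever \emph{leave} $\{g<\delta\}$, so there is no entry set at all.

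The paper's construction avoids these problems by building $U$ directly in the split coordinates rather than as a sublevel set. One takes a compact set $K=B_*(\epsilon)\cap\{f_*\geq c-\delta_1\}$ in the center-stable factor and forms the product $U_{\entry}=K\times\partial B_+(\sqrt{\delta_2})$ with a sphere in the unstable factor; then $U_{\tan}$ is the time-$[0,\tau]$ flow tube of $\partial K\times\partial B_+(\sqrt{\delta_2})$ under the (product) anti-gradient flow, and $U_{\exit}=\theta^*_\tau(\partial K)\times B_+(\sqrt{\delta_3})$ closes it off. Tangency of $U_{\tan}$ to the flow is automatic because it \emph{is} a flow tube; the estimates on $U_{\entry}$ and $U_{\exit}$ come from choosing $\delta_1<\delta_2$ and $\tau$ after $\epsilon$, and the final homology isomorphism is obtained by collapsing the $x_+$-coordinate inside $U$ (a deformation along which $f$ does not increase). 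This is more hands-on than ``choose a Lyapunov function,'' but it is precisely what delivers the three-piece boundary structure the proposition asserts; without something of this explicit nature, part (iii) is not proved.
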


\begin{proof}
Since the statement is local we can assume that $M=\R^N$ and $p$ is the origin. Let $m:=\mor(p)$ and $n:=\nul(p)$, so that $k=m+n$. By the Generalized Morse Lemma we can assume that the function $f$ has the form
\begin{equation*}
f(x_0,x_-,x_+)=f_0(x_0)-|x_-|^2+|x_+|^2,\s\s
\forall x_0\in\R^n,\ x_-\in\R^m,\ x_+\in\R^{N-k}, 
\end{equation*}
where  $f_0:\R^n\to\R$ is a function whose Hessian vanishes at the origin. Let us denote by $\Loc^0_*(p)$  the local homology of $f_0$ at the origin. By Gromoll-Meyer's Shifting Theorem (see \cite{Gromoll_Meyer:On_differentiable_functions_with_isolated_critical_points} or  \cite[page~50]{Chang:Infinite_dimensional_Morse_theory_and_multiple_solution_problems}) we have that
$\Loc_*(p)
\cong 
\Loc_{*-m}^0(p)$. In particular $\Loc_{n}^0(p)\cong\Loc_{k}(p)$ is non-trivial, which in turn implies that the origin is an isolated local maximum for the function $f_0$. However, the local homology of isolated local maxima is concentrated in maximal degree $n$, where it is isomorphic to the coefficient field $\Z_2$ (see \cite[page~51]{Chang:Infinite_dimensional_Morse_theory_and_multiple_solution_problems}). This proves~(i).

Now, consider a disc $D$ as in the statement of the lemma, and set
\begin{align*}
\E^+ & :=\big\{(x_0,x_-,x_+)\in\R^N\ \big|\ x_0=0,\ x_-=0\big\}. 
\end{align*}
We claim that $D$ is transverse to $\E^+$ at the origin $p$. Indeed, assume that there is a non-zero $v\in\Tan_p D\cap\E^+$. Consider a smooth curve $\gamma:(-\epsilon,\epsilon)\to D$ such that $\dot\gamma(0)=v$. Then, we have
\begin{align*}
\left.\frac{\diff^2}{\diff t^2}\right|_{t=0} f\circ\gamma(t)
=
\left.\frac{\diff}{\diff t}\right|_{t=0} \diff f(\gamma(t))\dot\gamma(t)
=
\underbrace{\hess f(p)[v,v]}_{>0} + \underbrace{\diff f(p)\ddot\gamma(0)}_{=0}
>0,
\end{align*}
which contradicts the fact that $f<0$ on $D\setminus\{p\}$.

We set 
\[B_*(r):=\big\{ (x_0,x_-)\in\R^k\ \big|\  |x_0|^2+|x_-|^2\leq r^2 \big\}.\] 
By the Implicit Function Theorem, there exist a radius $r>0$ and a smooth map $\psi:B_*(r)\to \R^{N-k}$ such that the disc
\[
D':=\{(x_0,x_-,\psi(x_0,x_-))\ |\ (x_0,x_-)\in B_*(r)\}
\]
is a compact neighborhood of the origin $p$ in $D$. Notice that $[D']=[D]$ in $\Loc_k(p)$.  Let $h_t:B_*(r)\hookrightarrow \R^N$ (for $0\leq t\leq1$) be the isotopy given by 
\[h_t(x_0,x_-)=(x_0,x_-,(1-t)\psi(x_0,x_-)).\] 
Notice that $D':=h_0(B_*(r))$, and $D'':=h_1(D')$ is a generator of the local homology  $\Loc_k(p)$. Moreover, $f$ is a Lyapunov function for $h_t$, i.e.\ $\tfrac{\diff}{\diff t} f\circ h_t\leq0$. Hence $[D'']=[D']$ in $\Loc_k(p)$. This proves point~(ii).

As for point (iii), for the sake of simplicity let us choose the Euclidean Riemannian metric on the Euclidean space $M=\R^N$ (the general case being analogous), and let $\theta_t$ be the corresponding anti-gradient flow of $f$. We consider the function $f_*:\R^k\to\R$ given by
\[f_*(x_0,x_-)=f_0(x_0)-|x_-|^2,\s\s\forall (x_0,x_-)\in\R^k.\]
Choose an arbitrarily small  $\epsilon>0$ so that the origin is a global maximum and the only critical point of the function $f_*$ restricted to the closed ball $B_*(\epsilon)$. Set $c:=f_*(0)=f(p)$. We can find $\delta_1\in(0,\epsilon^2)$ arbitrarily small so that the space 
\[K:=B_*(\epsilon)\cap\{f_*\geq c-\delta_1\}\] 
is a compact neighborhood of the origin contained in the interior of the closed ball $B_*(\epsilon)$, with smooth boundary 
$\partial K = B_*(\epsilon)\cap\{f_*=c-\delta_1\}$. 
Let $\theta^*_t$ be the anti-gradient flow of $f_*$. Fix $\tau>0$ small enough so that $\theta^*_t(\partial K)$ is contained in the interior of $B_*(\epsilon)$ for all $t\in[0,\tau]$. Since the origin is the only rest point of the flow $\theta^*_t$, there exists $\delta_2\in(\delta_1,\epsilon^2)$ such that
\[
\theta^*_\tau(\partial K)\subset\{f_*<c-\delta_2\}.
\]
For $r>0$, we denote by $B_+(r)$ the closed ball of radius $r$ in $\R^{N-k}$, i.e. 
\[B_+(r):=\big\{ x_+\in\R^{N-k}\ \big|\  |x_+|^2\leq r^2 \big\}.\]
We define the closed set 
\[U_{\entry}:= K \times \partial B_+(\textstyle\sqrt{\delta_2}).\] 
Notice that
\begin{align*}
f(x_0,x_-,x_+)=f_*(x_0,x_-) + \delta_2 \geq c-\delta_1 + \delta_2 = c + \delta,\s\s
\forall (x_0,x_-,x_+)\in U_{\entry},
\end{align*}
where $\delta:=\delta_2-\delta_1>0$. The anti-gradient flow $\theta_t$ has the form 
\[\theta_t(x_0,x_-,x_+)=(\theta^*_t(x_0,x_-),\theta^+_t(x_+)),\] 
where $\theta^+_t$ is the anti-gradient flow of the quadratic form $f-f_*$. Thus, there exist a unique $\delta_3\in(0,\delta_2)$ such that
$\theta_\tau\big(\partial K\times\partial B_+(\sqrt{\delta_2})\big)
=
\theta^*_\tau(\partial K)\times\partial B_+(\sqrt{\delta_3})$. We define the closed sets
\begin{align*}
U_{\exit}&:=\theta^*_\tau(\partial K)\times B_+(\textstyle\sqrt{\delta_3}),\\
U_{\tan}&:=\bigcup_{t\in[0,\tau]}\theta_t\left(\partial K\times\partial B_+(\textstyle\sqrt{\delta_2})\right).
\end{align*}
Notice that $U_{\entry}$, $U_{\exit}$, and $U_{\tan}$ are smooth compact manifolds with boundary. The first two are transverse to the anti-gradient of $f$, while the third one is tangent to it. The piecewise smooth hypersurface  
$V:=U_{\entry}\cup U_{\exit}\cup U_{\tan}$  separates $\R^N$. We define $U$ to be the (relatively compact) connected component of the complement of $V$ containing the origin $p$. The set $U$ is contained in the interior of $B_*(\epsilon)\times B_+(\epsilon)$, has clearly the mean-value property, and its closure $\overline{U}$ has entry set $U_{\entry}$ and exit set $U_{\exit}$. Moreover
\begin{align*}
f(x_0,x_-,x_+)= f_*(x_0,x_-) + \delta_3 < c-\delta_2 + \delta_3< c ,\s\s
\forall (x_0,x_-,x_+)\in U_{\exit}.
\end{align*}
Let $\rho:\R^N\to[0,1]$ be a smooth function supported inside $B_*(\epsilon)\times B_+(\epsilon)$ and such that $\rho|_U\equiv 1$. For $t\in[0,1]$, we define a smooth homotopy 
\[k_t:(\{f<c\}\cup U,\{f<c\})\to(\{f<c\}\cup U,\{f<c\})\] 
by $k_t(x_0,x_-,x_+)=(x_0,x_-,(1-t\rho(x_0,x_-,x_+))x_+)$. Notice that $k_0$ is the identity, the function $f$ is non-increasing along the homotopy $k_t$, and we have
\begin{align*}
f\circ k_1(x_0,x_-,x_+)=f(x_0,x_-,0)=f_*(x_0,x_-),\s\s \forall (x_0,x_-,x_+)\in U.
\end{align*}
In particular, $k_1(U)\subset\{f<c\}\cup\{p\}$. Therefore, the map $k_1$ is a homotopy equivalence
\[k_1:(\{f<c\}\cup U,\{f<c\})\to(\{f<c\}\cup\{p\},\{f<c\})\] 
whose homotopy inverse is given by the inclusion. This implies that the inclusion induces a homology isomorphism between $\Loc_*(p)$ and $\Hom_*(\{f<c\}\cup U,\{f<c\})$.
\end{proof}

\begin{prop}\label{p:injection_of_local_homology__general_statement}
Let $(M,g)$ be a complete Riemannian manifold, and $f:M\to\R$ a smooth function satisfying the Palais-Smale condition. Assume that $f$ has an isolated critical point $p$ with non-trivial local homology (with coefficients in $\Z_2$) in maximal degree $\mor(p)+\nul(p)$. Set $c:=f(p)$. If, for some $b>c$, the interval $(c,b]$ does not contain critical values of $f$, then the inclusion induces a homology monomorphism
\[
\Loc_*(p)\hookrightarrow \Hom_*(\{f<b\},\{f<c\}).
\]
\end{prop}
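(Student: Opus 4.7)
Applying Proposition~\ref{p:maximal_degree_local_homology}(iii), we fix a sufficiently small open isolating neighborhood $U\ni p$ satisfying the stated mean-value property and the transverse entry/exit decomposition of $\partial U$; by choosing $U$ small we may further arrange that $\overline U\cap\mathrm{crit}(f)=\{p\}$. The inclusion then induces an isomorphism $\Loc_*(p)\toup^{\cong}\Hom_*(\{f<c\}\cup U,\{f<c\})$, and by parts (i)--(ii) of the same proposition this group is isomorphic to $\Z_2$ and concentrated in degree $k=\mor(p)+\nul(p)$. It therefore suffices to show that its generator has non-zero image in $\Hom_k(\{f<b\},\{f<c\})$.

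Set $S:=\mathrm{crit}(f)\cap\{f=c\}\setminus\{p\}$, a closed subset of $M$ disjoint from $\overline U$ (since $p$ is isolated in $\mathrm{crit}(f)$). The plan is to pick an open neighborhood $V\supset S$ with $\overline V\cap \overline U=\emptyset$ and such that $\{f<c\}\cup U\cup V$ is forward-invariant under the anti-gradient flow $\theta_t$ of $f$, and then to show that $\{f<b\}$ deformation retracts onto $\{f<c\}\cup U\cup V$ relative to $\{f<c\}$. Indeed, any orbit of $\theta_t$ starting in $\{f<b\}$ which is not already in the target reaches it in finite time: by Palais--Smale and the absence of critical values in $(c,b]$, the orbit either descends into $\{f<c\}$ or accumulates at a level-$c$ critical point, and such a critical point lies in $U\cup V$ by construction. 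A continuous cut-off function $\chi\geq 0$ vanishing exactly on the target, chosen pointwise larger than the first-entry time, promotes this observation into an honest continuous retraction via $q\mapsto \theta_{\chi(q)}(q)$.

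The problem then reduces to showing that the inclusion $\Hom_*(\{f<c\}\cup U,\{f<c\})\to\Hom_*(\{f<c\}\cup U\cup V,\{f<c\})$ is injective. Since $U\cap V=\emptyset$ and both are open, the open sets $B_1:=\{f<c\}\cup U$ and $B_2:=\{f<c\}\cup V$ cover $B:=\{f<c\}\cup U\cup V$ with $B_1\cap B_2=\{f<c\}$; the relative Mayer--Vietoris sequence for the triad $(B;B_1,B_2)$ with common subspace $\{f<c\}$ therefore degenerates, using $\Hom_*(\{f<c\},\{f<c\})=0$, to the direct-sum decomposition
\begin{align*}
\Hom_*(B,\{f<c\})\cong \Hom_*(B_1,\{f<c\})\oplus \Hom_*(B_2,\{f<c\}),
\end{align*}
under which our map becomes the inclusion of the first summand, hence injective. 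The main technical obstacle lies in the second paragraph: because $S$ may be a complicated non-isolated invariant set, care is needed to choose $V$ so that orbits exit it only into $\{f<c\}$ (making the target forward-invariant) and so that the first-entry-time function admits a continuous majorant $\chi$; this is achieved by taking $V$ to be a Conley-type isolating neighborhood of $S$ (intersected with $\{f<b\}$), whose existence follows from $S$ being a closed invariant set of $\theta_t$ disjoint from $\overline U$.
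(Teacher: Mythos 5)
Your proposal follows the same overall strategy as the paper's proof: fix the special neighborhood $U$ from Proposition~\ref{p:maximal_degree_local_homology}(iii), surround the other level-$c$ critical points with a neighborhood $V$ disjoint from $\overline U$, deform $(\{f<b\},\{f<c\})$ onto $(\{f<c\}\cup U\cup V,\{f<c\})$ along the anti-gradient flow, and then split the relative homology into the $U$-piece and the $V$-piece. The use of relative Mayer--Vietoris for the triad $(\{f<c\}\cup U\cup V;\,\{f<c\}\cup U,\,\{f<c\}\cup V)$ is equivalent to the paper's excision argument.

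The one place where you gloss over a real point, and where the paper is visibly sharper, is the construction of $V$. You appeal to a ``Conley-type isolating neighborhood'' of $S$ and claim that it can be chosen with exit set contained in $\{f<c\}$. A generic isolating block does not have this property (for a saddle at level $c$, a small box has exit set with $f$-values on both sides of $c$), and Conley theory alone does not hand you the right block; one needs a specific construction adapted to $f$. The paper takes $V$ to be the forward-saturation $V=\bigcup_{t\geq0}\theta_t(V')$ of a small relatively compact neighborhood $V'$ of $S$ chosen inside the \emph{low} sublevel set $\{f<c+\delta/2\}$. This set is tautologically positively invariant (so the exit-set issue never arises), it remains inside $\{f<c+\delta/2\}$ since $f$ is non-increasing along $\theta_t$, and it cannot meet $U$ because the only way to enter $U$ is through $U_{\entry}\subset\{f\geq c+\delta\}$. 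This interplay between the height $c+\delta$ of the entry set of $U$ and the height $c+\delta/2$ of the neighborhood $V'$ is the crux of the disjointness $\overline V\cap\overline U=\varnothing$, and it is exactly what is hiding behind your phrase ``care is needed.'' You also do not record that $S$ (and its closure) is compact — a consequence of Palais--Smale — which is what makes the choice of a relatively compact $V'\subset\{f<c+\delta/2\}$ possible and the subsequent flow-deformation well behaved. With the forward-saturated $V$ in place, your retraction via a continuous majorant of the first-entry time is a reasonable way to spell out the deformation step that the paper states more briefly.
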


\begin{rem}
If the level set $\{f=c\}$ contains only finitely many critical points of $f$, the statement follows from \cite[Theorem~3.2]{Chang:Infinite_dimensional_Morse_theory_and_multiple_solution_problems} even without any assumption on the local homology of $p$. \hfill\qed
\end{rem}

\begin{proof}[Proof of Proposition~\ref{p:injection_of_local_homology__general_statement}]
Notice that the critical point $p$ satisfies the assumptions of Prop\-o\-si\-tion~\ref{p:maximal_degree_local_homology}. Choose a suitable sufficiently small open neighborhood $U$ of the critical point $p$ satisfying the properties stated in Proposition~\ref{p:maximal_degree_local_homology}(iii). Thus, there exists $\delta\in(0,b-c)$ such that the entry set $U_{\entry}$ is contained in the superlevel set $\{f\geq c+\delta\}$. Now, let $C$ be the compact set of critical points of $f$ with critical value $c$, and set $C':=C\setminus\{p\}$. Choose an open  neighborhood $V'$ of $C'$ that is relatively compact in the sublevel set $\{f<c+\tfrac\delta2\}$ and such that $\overline{V'}\cap \overline U=\varnothing$. We denote by  $\theta_t$ the anti-gradient flow of $f$, and we define the open set
\[
V:=\bigcup_{t\in[0,+\infty)} \theta_t(V').
\]
By construction, $V$ is positively invariant under the anti-gradient flow $\theta_t$. We claim that $V\cap U=\varnothing$. Otherwise there exists $q\in V'$ and $t>0$ such that $\theta_t(q)\in U_{\entry}$, but this is impossible since $f(\theta_t(q))\leq f(q)<c+\tfrac\delta2$ whereas $U_{\entry}\subset\{f\geq c+\delta\}$. By replacing $U$ with a smaller open neighborhood of $p$ still satisfying the properties of  Proposition~\ref{p:maximal_degree_local_homology}(iii), we can actually achieve $\overline V\cap \overline U=\varnothing$.

By the properties of the neighborhoods $U$ (see Proposition~\ref{p:maximal_degree_local_homology}(iii)) and $V$, the open set $\{f<c\}\cup V\cup U$ is positively invariant under the anti-gradient flow $\theta_t$. The closed set $\{f\leq b\} \setminus \big(\{f<c\}\cup V\cup U\big)$ does not contain critical points of $\An{n}$. Therefore, we can use the anti-gradient flow $\theta_t$ to deform the pair   $(\{f< b\},\{f<c\})$ into $(\{f< c\}\cup V\cup U,\{f<c\})$, which implies that the inclusion induces a homology isomorphism
\begin{align*}
\iota'_*:\Hom_*(\{f<c\}\cup V\cup U,\{f<c\})
\toup^{\cong}
\Hom_*(\{f< b\},\{f< c\}). 
\end{align*}
Since $\overline U\cap\overline V=\varnothing$, by excision we have the isomorphism
\begin{small}
\begin{align*}
\Hom_*(\{f<c\}\cup V\cup U,\{f< c\})
\cong
\Hom_*(\{f< c\}\cup V,\{f< c\})
\oplus
\Hom_*(\{f< c\}\cup U,\{f< c\}).
\end{align*} 
\end{small}%
By Proposition~\ref{p:maximal_degree_local_homology}(iii), the inclusion induces a homology isomorphism
\begin{align*}
\Loc_*(p)\toup^{\cong}\Hom_*(\{f<c\}\cup U,\{f<c\}),
\end{align*}
and thus it induces a monomorphism
\begin{align*}
\iota''_*:\Loc_*(p)\hookrightarrow\Hom_*(\{f<c\}\cup V\cup U,\{f<c\}).
\end{align*}
By the functoriality of singular homology, $\iota'_*\circ\iota''_*=(\iota'\circ\iota'')_*$, and therefore the inclusion induces a homology monomorphism as claimed.
\end{proof}

\section{Symplectically degenerate maxima}\label{s:strongly_visible_points}

\subsection{Definition and basic properties}\label{s:def_sympl_deg_max}
Let us adopt the notation of Section~\ref{s:Preliminaries} concerning the Hamiltonian diffeomorphism $\phi=\psi_{k-1}\circ...\circ\psi_0$. Given a contractible fixed point $z_0$ of   $\phi$, it is well known that one can choose the factorization $\psi_{k-1}\circ...\circ\psi_0$ in such a way that $z_0$ is a fixed point of all the $\psi_j$'s (see \cite[section~9]{Salamon_Zehnder:Morse_theory_for_periodic_solutions_of_Hamiltonian_systems_and_the_Maslov_index} or \cite[section~5.1]{Ginzburg:The_Conley_conjecture}). Indeed one can choose a Hamiltonian flow $\phi_t$ such that $\phi_0=\mathrm{id}$, $\phi_1=\phi$ and $\phi_t(z_0)=z_0$ for all $t\in[0,1]$ in the following way. Let $\theta_t$ be a Hamiltonian flow satisfying $\theta_0=\mathrm{id}$, $\theta_1=\phi$ and $\theta_{t+1}=\theta_t\circ\theta_1$ for all $t\in\R$. Since the 1-periodic orbit $\gamma(t):=\theta_t(z_0)$ is contractible, there exists a smooth homotopy $\gamma_s:\R/\Z\to \T^{2d}$ such that $\gamma_0\equiv z_0=\gamma_s(0)$ for all $s\in[0,1]$, and $\gamma_1=\gamma$. For each $r\in(0,1/2]$ and $z\in\T^{2d}$ we denote by $B_r(z)$ the open ball of radius $r$ with respect to the flat Riemannian metric on $\T^{2d}$ (we recall that our torus is 1-periodic, i.e. $\T^{2d}=\R^{2d}/\Z^{2d}$). By a slight abuse of notation, for each $z'\in B_{1/2}(z)$, we will write $z'-z$ for the unique shortest vector $v\in\R^{2d}$ such that $z+v=z'$. For $s,t\in[0,1]$, we consider a smooth family of functions $\rho_{s,t}:\T^{2d}\to[0,1]$ such that $\rho_{s,t}\equiv 1$ on $B_{1/8}(\gamma_s(t))$ and $\rho\equiv0$ outside $B_{1/4}(\gamma_s(t))$. We define a smooth family of Hamiltonian functions $K_{s,t}:\T^{2d}\to\R$ supported inside $B_{1/4}(\gamma_s(t))$ by
\[
K_{s,t}(z):=\rho_{s,t}(z)\,\langle J\tfrac{\diff}{\diff s}\gamma_s(t),z-\gamma_{s}(t) \rangle,\s\s \forall z\in B_{1/4}(\gamma_s(t)).
\]
Let $\kappa_{s,t}$ be the family of Hamiltonian diffeomorphisms of $\T^{2d}$ defined by 
\begin{align*}
\kappa_{0,t}&=\mathrm{id},\\
\tfrac{\diff}{\diff s}\kappa_{s,t}&=-J\,\nabla K_{s,t}\circ \kappa_{s,t}.
\end{align*}
A straightforward computation shows that $\kappa_{s,t}(z_0)=\kappa_{s,t}(\gamma_s(0))=\gamma_s(t)$. Moreover $K_{s,0}=K_{s,1}\equiv0$, which implies $\kappa_{s,0}=\kappa_{s,1}=\mathrm{id}$. Thus, we can obtain the Hamiltonian flow $\phi_t$ with the desired properties by setting $\phi_t:=\kappa_{1,t}^{-1}\circ\theta_t$.

We say that $z_0$ is a \textbf{symplectically degenerate maximum}  when
\begin{itemize}
\setlength{\itemindent}{10pt}
\item[\textbf{(SDM1)}] $z_0$ is an isolated local maximum of the generating functions $f_0,...,f_{k-1}$,
\item[\textbf{(SDM2)}] the local homology $\Loc_{dkn+d}(z_0\iter{kn})$ is non-trivial for infinitely many $n\in\N$.
\end{itemize}
We denote by $\K_{z_0}\subset\N$ the infinite subset of those $n$ for which \textbf{(SDM2)} holds.

Assumption \textbf{(SDM2)} imposes strong constraints on the Morse indices and the local homology of symplectically degenerate maxima, according to the following statement.

\begin{prop}\label{p:local_homology_top_deg}
Let $\zz=(z_0,...,z_{k-1})$ be a critical point of $\A$ such that,  for infinitely many $n\in\N$, the point $\zz\iter n$ is  isolated in the set of critical points of $\An n$ and the local homology $\Loc_{dkn+d}(\zz\iter{n})$ is non-trivial. We denote by $\K_{\zz}\subset\N$ the infinite subset of those $n$ for which this  condition holds. Then, for all $n\in\K_{\zz}$, we have
\begin{itemize}
\item[(i)] $\mor(\zz\iter{n})+\nul(\zz\iter{n})=dkn+d$,
\item[(ii)] $1$ is the only Floquet multiplier of $\phi^n$ at $z_0$,
\item[(iii)] the local homology of $\zz\iter{n}$ is isomorphic to $\Z_2$ and concentrated in maximal degree $dkn+d$, i.e.\ $\Loc_{*}(\zz\iter{n})\cong\Loc_{dkn+d}(\zz\iter{n})\cong\Z_2$,
\item[(iv)] the multiples of $n$ are in $\K_{\zz}$,
\item[(v)] if $m$ divides $n$ and $\nul(\zz\iter m)=\nul(\zz\iter n)$, then $m\in\K_{\zz}$.
\end{itemize}
Moreover, 
\begin{itemize}
\item[(vi)] if $1$ is the only Floquet multiplier of $\phi$ at $z_0$, then $\K_{\zz}=\N$.
\end{itemize}
\end{prop}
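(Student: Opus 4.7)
The plan is to first establish $\avmas(\zz)=0$, then deduce (i)--(iii) from the Liu--Long iteration inequalities of Proposition~\ref{p:iteration_properties_Long}, and finally propagate the defining property of $\K_\zz$ along iteration to obtain (iv)--(vi). The hypothesis $\Loc_{dkn+d}(\zz\iter n)\neq 0$ for $n\in\K_\zz$, combined with the general vanishing of local homology outside the range $[\mor,\mor+\nul]$ recalled in Section~\ref{s:local_homology}, forces
\[
\mor(\zz\iter n)\leq dkn+d\leq\mor(\zz\iter n)+\nul(\zz\iter n).
\]
Rewriting $\mor(\zz\iter n)=\mas(\zz\iter n)+dkn$ via the Symplectic Morse Index Theorem (Proposition~\ref{p:Symplectic_Morse_index_Theorem}) and substituting the Liu--Long bounds of Proposition~\ref{p:iteration_properties_Long}(i) squeezes $0\leq n\,\avmas(\zz)\leq 2d$ for every $n\in\K_\zz$. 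Since $\K_\zz$ is infinite, letting $n\to\infty$ in $\K_\zz$ forces $\avmas(\zz)=0$.

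With $\avmas(\zz)=0$, the upper Liu--Long inequality becomes $\mor(\zz\iter n)+\nul(\zz\iter n)\leq dkn+d$, matching the lower bound from the hypothesis and yielding~(i). Equality in this bound is precisely the non-strict case of Proposition~\ref{p:iteration_properties_Long}(i), which yields~(ii): $1$ is the only Floquet multiplier of $\phi^n$ at $z_0$. Claim~(iii) is then an immediate application of Proposition~\ref{p:maximal_degree_local_homology}(i), whose hypothesis is precisely non-vanishing local homology in the maximal degree $\mor+\nul$.

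For (iv)--(vi), I plan to propagate the Floquet, nullity, and Maslov data along iteration. Given $n\in\K_\zz$ and $m=sn$, item~(ii) shows $1$ is the only Floquet multiplier of $\phi^n$, hence also of $\phi^m=(\phi^n)^s$. Proposition~\ref{p:same_nullity} then yields $\nul(\zz\iter m)=\nul(\zz\iter n)$, and Proposition~\ref{p:iteration_properties_Long}(ii) applied to $\zz\iter n$ (using $\avmas(\zz\iter n)=n\,\avmas(\zz)=0$) yields $\mas(\zz\iter m)=\mas(\zz\iter n)$; combining these via Proposition~\ref{p:Symplectic_Morse_index_Theorem} gives $\mor(\zz\iter m)+\nul(\zz\iter m)=dkm+d$. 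For~(v), the assumption $\nul(\zz\iter m)=\nul(\zz\iter n)$ with $m\mid n$ is equivalent, via Proposition~\ref{p:same_nullity}, to $1$ being the only Floquet multiplier of $\phi^m$, so the same identifications run symmetrically. For~(vi), $1$ being the only Floquet multiplier of $\phi$ propagates to every $\phi^n$, and the argument of~(iv) applied to any fixed element of $\K_\zz$ together with~(v) sweeps out all of $\N$.

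The main obstacle is the remaining clause of the defining condition of $\K_\zz$: I must actually exhibit $\Loc_{dkm+d}(\zz\iter m)\neq 0$ and the isolation of $\zz\iter m$, not merely that $dkm+d$ is the correct maximal degree. My plan is to build an explicit critical disc. Proposition~\ref{p:maximal_degree_local_homology}(ii) produces a $(dkn{+}d)$-dimensional disc $D_n\subset(\R^{2d})^{\times kn}$ through $\zz\iter n$ on which $\An n$ is strictly below its critical value, and its diagonal image $D_n\iter s\subset(\R^{2d})^{\times km}$ is a disc through $\zz\iter m$ on which $\An m$ is strictly below its critical value. A dimension count using Proposition~\ref{p:nullity} shows that the orthogonal complement of the diagonal subspace splits into equal-dimensional negative and positive subspaces for $\hess\An m(\zz\iter m)$, each of dimension $dk(s-1)n$ and containing no null directions (since $\nul(\zz\iter m)=\nul(\zz\iter n)$ is already exhausted by the diagonal). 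Enlarging $D_n\iter s$ by the negative non-diagonal directions produces a disc of the required dimension $dkm+d$ witnessing, via Proposition~\ref{p:maximal_degree_local_homology}(ii), the non-vanishing of $\Loc_{dkm+d}(\zz\iter m)$. Isolation of $\zz\iter m$ is where the most delicate work lies; I would try to deduce it from the mean-value neighborhoods of Proposition~\ref{p:maximal_degree_local_homology}(iii) combined with the fact that the enlarged critical disc locally captures all directions along which the action is non-increasing.
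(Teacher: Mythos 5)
Your treatment of (i)--(iii) is essentially the paper's own argument: squeeze the Maslov index between the Liu--Long bounds to force $\avmas(\zz)=0$, then deduce equality in the upper Liu--Long estimate to get (i) and (ii), and invoke Proposition~\ref{p:maximal_degree_local_homology}(i) for~(iii). That part is fine.

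For (iv)--(vi) the index bookkeeping you carry out (propagating Floquet data, the nullity equality from Proposition~\ref{p:same_nullity}, the Maslov equality from Proposition~\ref{p:iteration_properties_Long}(ii), and the resulting identity $\mor(\zz\iter m)+\nul(\zz\iter m)=dkm+d$) is correct and is also what the paper does. But the part you flag as the real work, showing that $\Loc_{dkm+d}(\zz\iter m)$ is actually nonzero and that $\zz\iter m$ is isolated, is where your plan has genuine gaps. First, Proposition~\ref{p:maximal_degree_local_homology}(ii) does not run in the direction you need: it \emph{assumes} nontrivial top-degree local homology and concludes that any disc $D$ of dimension $\mor+\nul$ with $f|_{D\setminus\{p\}}<f(p)$ generates it. You are trying to use the converse (``such a disc exists, hence top local homology is nonzero''), which is not stated in the paper and would require its own proof. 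Second, the extension of $D_n\iter s$ by the non-diagonal negative eigenspace of the Hessian does not obviously produce a disc on which $\An m$ stays strictly below the critical value: the negative-definiteness of the Hessian on that complement controls the second-order behaviour along those directions at the critical point only, and the cross terms between the diagonal and non-diagonal directions are not controlled without first normalizing coordinates via the Gromoll--Meyer splitting lemma. Once you invoke that lemma, you are essentially redoing the paper's argument. Third, the isolation of $\zz\iter m$ for $m$ a proper multiple of $n$ does not follow from isolation of $\zz\iter n$ by any soft argument (there could a priori be genuine $m$-periodic critical points accumulating at $\zz\iter m$), and the appeal to mean-value neighborhoods from Proposition~\ref{p:maximal_degree_local_homology}(iii) does not address this.

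The paper closes all three gaps at once with a central-manifold argument. Because the gradients intertwine, $\nabla\An n\circ\itmap{n/m}=\itmap{n/m}\circ\nabla\An m$, the diagonal embedding sends a central manifold $N_m$ of $\nabla\An m$ at $\zz\iter m$ to a central manifold $N_n=\itmap{n/m}(N_m)$ of $\nabla\An n$ at $\zz\iter n$ (the dimensions match by the nullity equality). Gromoll--Meyer's splitting theorem then identifies $\Loc_{j+\mor(\zz\iter n)}(\zz\iter n)$ with $\Hom_{j}(\{\An n|_{N_n}<nc\}\cup\{\zz\iter n\},\{\An n|_{N_n}<nc\})$ and likewise for $m$; since $\itmap{n/m}$ restricts to a diffeomorphism $N_m\to N_n$ carrying $\An m|_{N_m}$ to a positive multiple of $\An n|_{N_n}$, these relative groups coincide, and the degree shift $\mor(\zz\iter n)-\mor(\zz\iter m)=dk(n-m)$ (from the Maslov equality) gives $\Loc_{dkm+d}(\zz\iter m)\cong\Loc_{dkn+d}(\zz\iter n)\neq0$. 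The same identification of central manifolds also shows that $\zz\iter m$ is isolated, since nearby critical points of $\An m$ live in $N_m$ and correspond to nearby critical points of $\An n$ in $N_n$. This is the key idea your proposal is missing; with it, the explicit disc construction becomes unnecessary.
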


\begin{proof}
The assumption on the local homology implies that
\begin{align*}
\mor(\zz\iter{n})
\leq
dkn+d
\leq
\mor(\zz\iter{n})
+
\nul(\zz\iter{n}),\s\s\forall n\in\K_{\zz}.
\end{align*}
By the Symplectic Morse index Theorem (Proposition~\ref{p:Symplectic_Morse_index_Theorem}) we infer
\begin{align*}
\mas(\zz\iter{n})\leq d\leq\mas(\zz\iter{n})+\nul(\zz\iter{n}),\s\s\forall n\in\K_{\zz}.
\end{align*}
In particular $\avmas(\zz)=0$.  This, together with Proposition~\ref{p:iteration_properties_Long}(i), implies
\[\mas(\zz\iter{n})+\nul(\zz\iter{n})\leq d,\s\s\forall n\in\N,\]
and thus
\[\mas(\zz\iter{n})+\nul(\zz\iter{n})=d,\s\s\forall n\in\K_{\zz},\] 
which is equivalent to~(i). Proposition~\ref{p:iteration_properties_Long}(i) further implies (ii). Point~(i), together with  Proposition~\ref{p:maximal_degree_local_homology}, implies (iii). 

Now, let us assume that $\nul(\zz\iter m)=\nul(\zz\iter n)$ for some positive integer $m$ that divides $n\in\K_{\zz}$. By point~(ii) and Prosposition~\ref{p:same_nullity}, the only Floquet multiplier of $\phi^m$ at $z_0$ is $1$. By Proposition~\ref{p:iteration_properties_Long}(ii), we infer that  
\begin{align}\label{e:inside_main_proof_mas_n_equal_mas_m}
\mas(\zz\iter m)=\mas(\zz\iter n).
\end{align}
Let $N_m\subset(\R^{2d})\iter {km}/\Z^{2d}$ be a central manifold of the vector field $\nabla \An m$ at $\zz\iter m$. We recall that $N_m$ is an invariant submanifold for the gradient flow of $\An m$ such that $\Tan_{\zz\iter m}N_m$ is the kernel of the Hessian of $\An m$ at $\zz\iter m$, see \cite[Section~3.2]{Guckenheimer_Holmes:Nonlinear_oscillations_dynamical_systems_and_bifurcations_of_vector_fields}. In particular $\dim(N_m)=\nul(\zz\iter m)$. Consider the diagonal embedding \[\itmap {n/m}:(\R^{2d})\iter{km}/\Z^{2d}\hookrightarrow(\R^{2d})\iter{kn}/\Z^{2d}\] given by
$ \itmap {n/m}(\ww)=\ww\iter {n/m}$. 
A straightforward calculation shows that \[\nabla\An n\circ \itmap{n/m}=\itmap{n/m}\circ\nabla \An m,\] where the gradients in this equation are with respect to the standard flat metrics of $(\R^{2d})\iter {km}/\Z^{2d}$ and $(\R^{2d})\iter {kn}/\Z^{2d}$. This implies that the  manifold $N_n:=\itmap{n/m}(N_m)$ is invariant under the gradient flow of $\An n$. Moreover, 
\[
\dim(N_n)=\dim(N_m)=\nul(\zz\iter m)=\nul(\zz\iter n),
\]
and therefore $\Tan_{\zz\iter n}N_n$ is the kernel of the Hessian of $\An n$ at $\zz\iter n$. This proves that $N_n$ is a central manifold of the vector field $\nabla \An n$ at $\zz\iter n$.
Set $c:=\A(\zz)$, so that $\An m(\zz\iter m)=mc$ and $\An n(\zz\iter n)=nc$. We have
\begin{equation}\label{e:loc_homology_grom_meyer_proof_main}
\begin{split}
\Loc_{j+\mor(\zz\iter n)}(\zz\iter n) & \cong \Hom_{j}(\{\An n|_{N_n}<nc\}\cup \{\zz\iter n\}, \{\An n|_{N_n}<nc\})\\
& \cong\Hom_{j}(\{\An m|_{N_m}<mc\}\cup \{\zz\iter m\}, \{\An m|_{N_m}<mc\})\\
& \cong \Loc_{j+\mor(\zz\iter m)}(\zz\iter m),
\end{split}
\end{equation}
where the first and third isomorphisms follow from Gromoll-Meyer's splitting Theorem (see \cite[Section~3]{Gromoll_Meyer:On_differentiable_functions_with_isolated_critical_points}). By \eqref{e:inside_main_proof_mas_n_equal_mas_m} and Proposition~\ref{p:Symplectic_Morse_index_Theorem} we have that 
\[\mor(\zz\iter n)=\mas(\zz\iter n)+dkn=\mas(\zz\iter m)+dkn=\mor(\zz\iter m)+dk(n-m).\]
This, together with~\eqref{e:loc_homology_grom_meyer_proof_main}, implies that $\Loc_{dkm+d}(\zz\iter m)\cong\Loc_{dkn+d}(\zz\iter n)$, and proves point~(v). The proof of~(iv) is analogous: if $m$ is now a multiple of $n\in\K_{\zz}$, then $1$ is the only Floquet multiplier of $\phi^m$ at $z_0$, which implies that $\nul(\zz\iter n)=\nul(\zz\iter m)$ and $\Loc_{dkn+d}(\zz\iter n)\cong\Loc_{dkm+d}(\zz\iter m)$. Finally, (vi) follows from (iv--v) and Proposition~\ref{p:same_nullity}.
\end{proof}

\subsection{An example}
It is easy to construct a Hamiltonian diffeomorphism of a  torus with a symplectically degenerate maximum. Consider a $C^2$-small smooth function $f_0:\T^{2d}\to\R$ having an isolated totally degenerate local maximum at $z_0$ with critical value $f_0(z_0)=0$. By totally degenerate we mean that the Hessian of $f_0$ at $z_0$ vanishes. Let $\phi$ be the Hamiltonian diffeomorphism of  $(\T^{2d},\omega)$ having $f_0$ as generating function. Notice that $z_0$ is a contractible fixed point of $\phi$. We claim that $z_0$ is a symplectically degenerate maximum. Indeed, consider the discrete symplectic action $\An n:(\R^{2d})\iter n/\Z^{2d}\to\R$ given by
\[
\An n(\zz)
=
\sum_{j\in\Z_n}
\Big(\langle y_j,x_j-x_{j+1}\rangle
+
f_0(x_{j+1},y_j)
\Big).
\]
Since $z_0$ is a totally degenerate maximum of $f_0$, we have that
\[
\hess\An n(z_0\iter n)[\bm{Z'},\bm{Z''}]
=
\sum_{j\in\Z_n}
\Big(
\langle Y_j',X_j''-X_{j+1}''\rangle
+
\langle Y_j'',X_j'-X_{j+1}'\rangle
\Big),
\]
for all $\bm{Z'}=(Z_0',...,Z_{n-1}'),\bm{Z''}=(Z_0'',...,Z_{n-1}'') \in(\R^{2d})\iter n$. An easy computation shows that $\mor(z_0\iter n)=dn-d$ and $\nul(z_0\iter n)=2d$. Consider the submanifold
\[
N_n=\left\{ w\iter n\ |\ w\in\T^{2d}\right\}\subset(\R^{2d})\iter n/\Z^{2d}.
\]
It is easy to see that this is a central manifold for the gradient of $\An n$ at $z_0\iter n$. Namely $N_n$ is a submanifold invariant by the gradient flow of $\An n$, and whose tangent space at $z_0\iter n$ is the kernel of $\hess\An n(z_0\iter n)$. By Gromoll-Meyer's splitting Theorem (see \cite[Section~3]{Gromoll_Meyer:On_differentiable_functions_with_isolated_critical_points})  we have
\begin{align*}
\Loc_j(z_0\iter n)
&\cong
\Hom_{j-\mor(z_0\iter n)}(\{\An n|_{N_n}<0\}\cup\{z_0\iter n\},\{\An n|_{N_n}<0\})\\
&\cong
\Hom_{j-dn+d}(\{f_0<0\}\cup\{z_0\},\{f_0<0\})\\
&=\Loc_{j-dn+d}(z_0).
\end{align*}
In particular $\Loc_{dn+d}(z_0\iter n)\cong\Loc_{2d}(z_0)$ and, since $z_0$ is an isolated local maximum of $f_0$, this latter local homology group is non-trivial. This shows that $z_0$ is a symplectically degenerate maximum.

\subsection{Average-action spectrum and symplectically degenerate maxima}

In this subsection we prove Theorem~\ref{t:main} stated in the introduction. The result is a consequence of the following homological vanishing property, which is inspired by an analogous statement due to Bangert and Klingenberg \cite[Theorem~2]{Bangert_Klingenberg:Homology_generated_by_iterated_closed_geodesics} in the setting of closed geodesics.

\begin{lem}[Homological vanishing]
\label{l:homological_vanishing}
Let $z_0$ be a  symplectically degenerate maximum of the Hamiltonian diffeomorphism $\phi=\psi_{k-1}\circ...\circ\psi_0:\T^{2d}\to\T^{2d}$. Assume that $z_0$ is isolated in the set of contractible fixed points of $\phi^n$ for all $n\in\K_{z_0}$. Set $c:=F(z_0\iter k)$ and choose an arbitrary $\epsilon>0$.  Then, the inclusion-induced homomorphism
\begin{align*}
\iota_*:\Loc_*(z_0\iter {kn})\to\Hom_*(\{\An{n}<nc+\epsilon\},\{\An{n}<nc\})
\end{align*}
is trivial, provided $n\in\K_{z_0}$ is large enough.
\end{lem}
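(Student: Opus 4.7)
My plan is to construct, for $n \in \K_{z_0}$ sufficiently large, an explicit $(dkn+d+1)$-chain $B_n \subset \{\An{n} < nc+\epsilon\}$ whose boundary, modulo chains in $\{\An{n} < nc\}$, represents a generator of $\Loc_*(z_0\iter{kn})$; this exhibits the triviality of $\iota_*$. By Proposition~\ref{p:local_homology_top_deg}(iii), $\Loc_*(z_0\iter{kn}) \cong \Z_2$ is concentrated in the top degree $dkn+d$, and by Proposition~\ref{p:maximal_degree_local_homology}(ii) any smoothly embedded $(dkn+d)$-dimensional disc $D_n$ containing $z_0\iter{kn}$ in its interior with $\An{n}|_{D_n\setminus\{z_0\iter{kn}\}}<nc$ generates this local homology. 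I would build such a $D_n$ from the diagonal structure: the map $w\mapsto w\iter{kn}$ embeds $\T^{2d}$ as a submanifold $\Delta_n$ on which $\An{n}$ restricts to $n\,F_{\mathrm{sum}}$, where $F_{\mathrm{sum}} := \sum_{j\in\Z_k}f_j$ has an isolated local maximum at $z_0$ by \textbf{(SDM1)}. Letting $V_-$ be a subspace of $T_{z_0\iter{kn}}$ of dimension $\mor(z_0\iter{kn})=d(kn-1)$ on which the Hessian of $\An{n}$ is negative definite, I take $D_n$ to be the exponential image of a small ball of radius $r$ in $T_{z_0\iter{kn}}\Delta_n \oplus V_-$.

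To bound $D_n$, I fix $w_0 \in \T^{2d}$ close to $z_0$ with $F_{\mathrm{sum}}(w_0)<c$ and a short path $\alpha:[0,1]\to\T^{2d}$ from $z_0$ to $w_0$. The diagonal translation $\tau_t(\zz) := \zz + (\alpha(t)-z_0)\iter{kn}$ is a well-defined self-diffeomorphism of $(\R^{2d})^{\times kn}/\Z^{2d}$, and I set $B_n := \bigcup_{t\in[0,1]} \tau_t(D_n)$. A direct computation, using that the coupling terms $\langle y_j, x_j-x_{j+1}\rangle$ are invariant under diagonal translation, yields
\begin{align*}
\An{n}(\zz + v\iter{kn}) - \An{n}(\zz) = \sum_{j=0}^{kn-1}\bigl[f_{j\bmod k}(p_j+v) - f_{j\bmod k}(p_j)\bigr],
\end{align*}
with $p_j := (x_{j+1},y_j)$. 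Taylor expanding around $z_0$ and invoking $\nabla f_j(z_0)=0$, which is forced by \textbf{(SDM1)}, this difference rewrites as $n[F_{\mathrm{sum}}(z_0+v)-c] + \langle M_{\zz}, v\rangle + O(kn|v|^3)$, where $M_{\zz} := \sum_j D^2 f_j(z_0)(p_j-z_0)$ satisfies $|M_{\zz}|\leq C\sqrt{kn}\,\|\zz - z_0\iter{kn}\|$ by Cauchy--Schwarz. The leading piece $n[F_{\mathrm{sum}}(z_0+v)-c]$ is non-positive and supplies the desired strict decrease near the diagonal; the ``mixing'' term $\langle M_{\zz}, v\rangle$ is the obstruction to a naive shift.

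The main obstacle, and the crux of the argument, is to balance $r$, $|w_0-z_0|$, and $n$ so that both (i) $B_n \subset \{\An{n}<nc+\epsilon\}$ and (ii) $\tau_1(D_n) \subset \{\An{n}<nc\}$. When $D^2 F_{\mathrm{sum}}(z_0)$ is negative definite with smallest eigenvalue $\lambda$, picking $r$ small depending only on $\epsilon$, $k$, $\lambda$ and then choosing $|w_0-z_0|$ of order $r/\sqrt{n}$ does the job: the maximum excess of $\An{n}$ over $nc$ along $B_n$ is bounded by a constant multiple of $kr^2$, independent of $n$, while the decrease $n[F_{\mathrm{sum}}(w_0)-c]\sim -n|w_0-z_0|^2$ strictly dominates the mixing correction $C\sqrt{kn}\,r\,|w_0-z_0|$ by a large factor. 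When $F_{\mathrm{sum}}$ has higher-order degeneracy at $z_0$, as in the example from the preceding subsection, the correct scaling becomes $r \sim n^{-\beta}$ with $\beta$ determined by the order of vanishing of $F_{\mathrm{sum}}-c$ at $z_0$; Proposition~\ref{p:maximal_degree_local_homology}(iii) still permits such shrinking $D_n$ as a valid representative of $\Loc_*(z_0\iter{kn})$. Once (i) and (ii) are secured, $B_n$ provides a null-homotopy of $D_n$ into $\{\An{n}<nc\}$ within $\{\An{n}<nc+\epsilon\}$, so $\iota_*[D_n]=0$, proving the lemma.
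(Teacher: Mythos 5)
Your plan shares the paper's overall strategy of exhibiting an explicit homotopy that pushes a generating disc of $\Loc_*(z_0\iter{kn})$ into $\{\An{n}<nc\}$ while staying in $\{\An{n}<nc+\epsilon\}$, but the specific homotopy you choose --- a \emph{full} diagonal translation $\tau_t(\zz)=\zz+(\alpha(t)-z_0)\iter{kn}$ --- hides a gap that your proposed fix does not close. The quantitative balance at the end of your sketch requires a \emph{finite} order $m$ of vanishing of $F_{\mathrm{sum}}-c$ at $z_0$: the drop produced by the shift is $n[F_{\mathrm{sum}}(w_0)-c]\sim-\lambda n|w_0-z_0|^m$, the mixing term costs of order $\sqrt{kn}\,r\,|w_0-z_0|$, and the excess budget demands $\sqrt{kn}\,r\,|w_0-z_0|<\epsilon$; reconciling these forces $r\sim n^{-\beta}$ with $\beta>(m-2)/(2m)$, so the admissible exponent depends on $m$ and tends to $1/2$ as $m\to\infty$. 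But nothing in \textbf{(SDM1)}--\textbf{(SDM2)} bounds the order of vanishing of $F_{\mathrm{sum}}$ at $z_0$. Indeed the paper's own example of a symplectically degenerate maximum takes $f_0$ with \emph{vanishing} Hessian at $z_0$, so $F_{\mathrm{sum}}-c$ may vanish to arbitrarily high or infinite order (e.g.\ like $-e^{-1/|z-z_0|^2}$), and then there is no admissible $\beta$. This is not a technicality: the extreme flatness at $z_0$ is exactly what makes symplectically degenerate maxima hard.

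The paper avoids your obstacle by translating only \emph{part} of $\zz$: it shifts only the $y$-coordinates $y_j$, and only for indices $j\in\{kn',\dots,kn-1\}$, by a vector $v$ of a \emph{fixed} norm $r$ independent of $n$. Breaking the diagonal produces a single telescoping term $t\langle v,x_{kn'}-x_0\rangle$, bounded by $2Rr$ uniformly in $n$ and $n'$; and since either $|w|$ or $|v+w|$ has norm $\geq r/2$, a block of at least $kn'$ summands each contribute at most $-\rho(r/4)$, where $\rho$ is a fixed positive modulus built from nothing but the sign of the $f_j$ near $z_0$. The final drop $\leq-kn'\rho(r/4)+2Rr$ can be made negative by choosing $n'$ large, uniformly in the order of vanishing, and then requiring $n>2n'$. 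Because the estimate only uses that $f_j<0$ away from $z_0$ (quantified by $\rho$) and never invokes a Taylor expansion, it is robust to arbitrary degeneracy. Two smaller issues in your sketch: you write $\mor(z_0\iter{kn})=d(kn-1)$, which presupposes $\nul(z_0\iter{kn})=2d$ (only $\nul\leq 2d$ is known a priori), so $T_{z_0\iter{kn}}\Delta_n\oplus V_-$ need not have the right dimension nor be a direct sum; and the behavior of $\An{n}$ on $\tau_t(\partial D_n)$, which must also remain in $\{\An{n}<nc\}$, is not addressed, whereas the paper treats it by a separate two-case analysis on the boundary of its polydisc.
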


\begin{proof}
In order to simplify the notation, we will work inside a fundamental domain of the universal cover of $(\R^{2d})\iter {kn}/\Z^{2d}$. Hence we can assume that the domain of $\An n$ is simply $(\R^{2d})\iter {kn}$ without taking the quotient by $\Z^{2d}$. Moreover, let us assume without loss of generality that $z_0=0$ and $c=f_0(0)=...=f_{k-1}(0)=0$. Let $R>0$ be sufficiently small so that, for all $j\in\Z_k$, $f_j<0$ outside the origin  in a ball of radius $3R$ centered at $0$. For $n\in\K_n$, consider the $(dkn+d)$-dimensional vector subspace
\begin{align*}
\E_n=
\left\{\zz=(x_0,y_0,...,x_{kn-1},y_{kn-1})\ \Bigg|\ 
  \begin{array}{l}
    x_0,...,x_{kn-1},w \in\R^d \\ 
    y_j=w+x_{j+1}-x_j \ \ \forall j\in\Z_{kn}  
  \end{array}
\right\},
\end{align*}
and, for $0<r<R$, the polydisc
\begin{gather*}
W_n=W_n(R,r)=
\Big\{\zz\in \E_n\ \Big|\ |x_0|\leq R,...,|x_{kn-1}|\leq R, |w|\leq r\Big\}.
\end{gather*}
On $W_n$, the discrete symplectic action  $\An n$ is given by
\begin{gather*}
\An n(\zz)=
\sum_{j\in\Z_{kn}}
\Big(
-|x_j-x_{j+1}|^2
+
f_j(x_{j+1},w+x_{j+1}-x_j )
\Big),
\s\s\forall \zz\in W_n.
\end{gather*}
By Propositions~\ref{p:maximal_degree_local_homology} and~\ref{p:local_homology_top_deg}(iii),  $[W_n]$ is a generator of the local homology of $\An n$ at $0\iter n$. 

Now, fix a vector $v\in\R^d$ such that $|v|=r$,  and let $h:[0,1]\times W_n\to(\R^{2d})\iter{n}$ be the homotopy given by
\begin{align*}
h(t,\zz)=\zz + t\zz', 
\end{align*}
where $\zz=(x_0,y_0,...,x_{kn-1},y_{kn-1})$, $\zz'=(0,y_0',0,y_1',...,0,y_{kn-1}')$, and 
\begin{align*}
&y_j'=0,\s\s\forall j\in\{0,...,kn'-1\},\\ 
&y_j'=v,\s\s\forall j\in\{kn',...,kn-1\}. 
\end{align*}
Notice that
\begin{equation}\label{e:estimate_bangert_klingenberg_lemma}
\begin{split}
\An n\circ h(t,\zz)= &
\sum_{j=0}^{kn'-1}
\Big(
-|x_j-x_{j+1}|^2
+
f_j(x_{j+1},w+x_{j+1}-x_j )
\Big)\\
&+ \sum_{j=kn'}^{kn-1}
\Big(
-|x_j-x_{j+1}|^2+
f_j(x_{j+1},tv+w+x_{j+1}-x_j )\Big)\\
& +t\,\langle v,x_{kn'}-x_0\rangle.
\end{split}
\end{equation}
Let $\rho:[0,R]\to[0,\infty)$ be a monotone increasing continuous function such that
\[ \rho(s)\leq \min\left\{s^2,\min_
{ \scriptsize
  \begin{array}{c}
    |z''|=s, \\ 
    j\in\Z_{k} \\ 
  \end{array}
}
\left\{-f_j(z'')\right\}\right\}. \]
This function allows us to express the following estimate:
\[
-|x_j-x_{j+1}|^2
+
f_j(x_{j+1},u+x_{j+1}-x_j )
\leq
-\rho\big(\tfrac{|u|}{2}\big),\s\s\forall u\in\R^d\mbox{ with }|u|\leq R.
\]
Now, let us apply this estimate to~\eqref{e:estimate_bangert_klingenberg_lemma}.  Notice that, since $|v|=r\geq|w|$, at least one of the vectors $w$ and $v+w$ has norm   larger than or equal to $r/2$. Hence, if we choose $r<\epsilon/(2R)$, $n'>2Rr/\big(\rho\big(\tfrac{r}{4}\big)k\big)$ and $n>2n'$, we have
\begin{align*}
\An n\circ h(t,\zz)\leq &
\,t\,\langle v,x_{kn'}-x_0\rangle
\leq
t\,|v|\,|x_{kn'}-x_0|\leq 2Rr<\epsilon,
\\
\An n\circ h(1,\zz)\leq &
-kn'\rho\big(\tfrac{r}{4}\big)  +\underbrace{\langle v,x_{kn'}-x_0\rangle}_{\leq |v|(|x_{kn'}|+|x_0|)}\leq -kn'\rho\big(\tfrac{r}{4}\big)+2Rr<0.
\end{align*}
This proves that the homotopy $h$ deforms the disc $W_n$ into the action sublevel set $\{\An n<0\}$, and the disc $W_n$ remains in the  action sublevel set $\{\An n<\epsilon\}$ along the homotopy. In order to conclude the proof it is enough to show that, up to further assuming  $r<\rho(R)/2R$, the boundary $\partial W_n$ of the disc remains in the  action sublevel set $\{\An n<0\}$ along the homotopy. Consider an arbitrary $\zz\in\partial W_n$. We have two (non-exclusive) cases: $|w|=r$ or $|x_j|=R$ for some $j\in\Z_{kn}$. In the former case, $|w|=r$, we have
\begin{align*}
\An n\circ h(t,\zz)&\leq 
-kn'\rho\big(\tfrac{|w|}{2}\big)  +t\,\langle v,x_{kn'}-x_0\rangle\\
&\leq  -kn'\rho\big(\tfrac{r}{2}\big)+2Rr\\
&\leq -kn'\rho\big(\tfrac{r}{4}\big)+2Rr\\
&< 0.
\end{align*}
In the latter case, $|x_j|=R$ for some $j\in\Z_{kn}$, we have
\[
\An n\circ h(t,\zz)\leq 
\underbrace{f_{j-1}(x_{j},y_{j-1}+ty_{j-1}')}_{\leq-\rho(R)}
+t\,\underbrace{\langle v,x_{kn'}-x_0\rangle}_{\leq 2Rr}
\leq
-\rho(R)+2Rr<0.\qedhere
\]
\end{proof}

Lemma~\ref{l:homological_vanishing}  implies abundance of periodic points ``localized'' around the average-action of symplectically degenerate maxima in the average-action spectrum of $\phi$. More precisely, we have the following theorem, from which Theorem~\ref{t:main} directly follows.

\begin{thm}
\label{t:symplectically_degenerate_maxima}
Let $z_0$ be a symplectically degenerate maximum of a Hamiltonian diffeomorphism $\phi=\psi_{k-1}\circ...\circ\psi_0$ of the standard symplectic torus $(\T^{2d},\omega)$. Assume that $z_0$ is isolated in the  set of contractible fixed points of $\phi^n$ for all $n\in\K_{z_0}$. Then, there exist $n_{z_0}\in\K_{z_0}$ and a sequence $\{z_n\,|\,n\in\K_{z_0},\, n\geq n_{z_0}\}$ of contractible periodic points of $\phi$ with the following properties: $z_n$ is $n$-periodic and, if we denote by  $\zz_n$  the critical point of $\An{n}$ corresponding to $z_n$,  
we have $\An{n}(\zz_n)>\An{n}(z_0\iter{k n})$ and
\begin{align}\label{e:converging_average_action}
\lim_{n\to\infty} \Big(\An{n}(\zz_n)- \An{n}(z_0\iter{k n}) \Big)=0. 
\end{align}
\end{thm}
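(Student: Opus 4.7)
The plan is to derive Theorem~\ref{t:symplectically_degenerate_maxima} as an almost formal consequence of the Homological Vanishing Lemma~\ref{l:homological_vanishing}, combined with the local-homology injection of Proposition~\ref{p:injection_of_local_homology__general_statement}. The intuition is that for each $n \in \K_{z_0}$, the local homology at $z_0\iter{kn}$ is a single copy of $\Z_2$ concentrated in the top degree $dkn+d$ (by Proposition~\ref{p:local_homology_top_deg}(iii)); if no other critical values of $\An n$ existed in a thin strip above $nc$, that nontrivial class would survive into the relative homology of the strip, whereas Lemma~\ref{l:homological_vanishing} explicitly pushes the generating disc into the sublevel set $\{\An n<nc\}$. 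The resulting contradiction forces new critical values to accumulate at $nc$ from above.

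First I would verify the standing hypotheses for each $n \in \K_{z_0}$: the isolation assumption makes $z_0\iter{kn}$ an isolated critical point of $\An n$, Proposition~\ref{p:Palais_Smale} provides the Palais--Smale condition on $(\R^{2d})^{\times kn}/\Z^{2d}$, and Proposition~\ref{p:local_homology_top_deg}(iii) identifies $\Loc_*(z_0\iter{kn})\cong\Z_2$ concentrated in degree $dkn+d$. Fixing $\epsilon>0$ and arguing contrapositively, if for some such $n$ the half-open interval $(nc,nc+\epsilon]$ contained no critical values of $\An n$, then Proposition~\ref{p:injection_of_local_homology__general_statement} would yield an injection
\[ \Loc_*(z_0\iter{kn}) \hookrightarrow \Hom_*(\{\An n<nc+\epsilon\},\{\An n<nc\}) \]
that is nonzero on the top-degree generator. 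But Lemma~\ref{l:homological_vanishing} forces this same inclusion-induced map to vanish as soon as $n \in \K_{z_0}$ exceeds some threshold $N(\epsilon)$. Hence for every $\epsilon>0$ the interval $(nc,nc+\epsilon]$ must contain a critical value of $\An n$ whenever $n \in \K_{z_0}$ and $n \geq N(\epsilon)$.

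To extract the sequence, I would pick $n_{z_0}\in\K_{z_0}$ large enough that the above dichotomy applies with $\epsilon=1$, and for each $n \in \K_{z_0}$ with $n \geq n_{z_0}$ set
\[ \eta_n := \inf\bigl\{\An n(\ww) - nc : \ww\in\mathrm{crit}(\An n),\ \An n(\ww)>nc\bigr\}, \]
which is finite by the previous step. The Palais--Smale condition makes the critical value set of $\An n$ closed, so whenever $\eta_n>0$ the infimum is attained at a critical point, while if $\eta_n=0$ then critical values accumulate at $nc$ from above; in either case I may select $\zz_n$ with $0 < \An n(\zz_n) - nc \leq \eta_n + 1/n$. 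The dichotomy then yields $\eta_n \to 0$ as $n \to \infty$ in $\K_{z_0}$, hence $\An n(\zz_n) - nc \to 0$, and letting $z_n$ denote the contractible $n$-periodic point of $\phi$ corresponding to $\zz_n$ produces the required sequence; the strict inequality $\An n(\zz_n) > \An n(z_0\iter{kn})$ is immediate from $\An n(\zz_n) > nc$. The only delicate point is the compatibility of the two key statements---Lemma~\ref{l:homological_vanishing} is proved inside a fundamental domain of the $\Z^{2d}$-action while Proposition~\ref{p:injection_of_local_homology__general_statement} lives on the quotient---but since both concern the same functional $\An n$, this is bookkeeping rather than a genuine obstacle.
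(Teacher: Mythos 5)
Your proposal is correct and follows essentially the same route as the paper: combine Proposition~\ref{p:injection_of_local_homology__general_statement} (a monomorphism from the local homology into the relative homology of a thin strip) with the Homological Vanishing Lemma~\ref{l:homological_vanishing} to force new critical values of $\An n$ just above $nc$. The paper phrases this as a single proof by contradiction and leaves the extraction of the sequence $\zz_n$ implicit, whereas you argue contrapositively and spell out the extraction via $\eta_n$ and the closedness of the critical value set under Palais--Smale, but the content is the same; your final remark about fundamental domains is also a non-issue since Lemma~\ref{l:homological_vanishing} is stated on the quotient and only its proof passes to a fundamental domain.
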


\begin{rem}\label{r:growth_rate}
Equation \eqref{e:converging_average_action} implies that $\{z_n\,|\,n\in\K_{z_0},\, n\geq n_{z_0}\}$ is an infinite subset of contractible periodic points belonging to pairwise distinct orbits of $\phi$. However,  each periodic point $z_n$ may have basic period that is a proper divisor of $n$. Nevertheless, if $\K_{z_0}$ contains all but finitely many prime numbers (for instance if 1 is the only Floquet multiplier of $\phi$ at $z_0$, according to Proposition~\ref{p:local_homology_top_deg}(vi)),  the growth rate of the contractible periodic orbits of $\phi$  having average-action in an arbitrarily small neighborhood of $F(z_0^{k})$  is at least the one of the prime numbers. Indeed, if $F(z_0^{k})$ is an isolated critical value of $F$, for every sufficiently large prime number $n$ contained in $\K_{z_0}$, the periodic point $z_n$ has minimal period $n$. \hfill\qed
\end{rem}

\begin{rem}\label{r:powers_of_prime}
Another interesting remark is that,  if $F(z_0^{k})$ is an isolated critical value of $F$ and $\K_{z_0}$ contains infinitely many powers of a prime number $p$, Theorem~\ref{t:symplectically_degenerate_maxima} implies the existence of infinitely many contractible periodic points belonging to pairwise distinct orbits of $\phi$, having basic periods in the set of powers of $p$ and average-action in any arbitrarily small neighborhood of $F(z_0^{k})$. \hfill\qed
\end{rem}

\begin{proof}[Proof of Theorem \ref{t:symplectically_degenerate_maxima}]
By contradiction, let us assume that such a sequence of periodic points does not exist. Hence, there exists $\epsilon>0$ and an infinite subset $\K\subset\K_{z_0}$ such that, for every $n\in\K$, there are no critical values of $\An n$ in the interval $(nc,nc+\epsilon]$, where $c:=\A(z_0\iter k)$. Notice that the critical point $z_0\iter{kn}$ of $\An{n}$ satisfies the assumptions of Proposition~\ref{p:injection_of_local_homology__general_statement}. This latter proposition implies that the inclusion induces a monomorphism
\begin{align*}
\iota_*:\Loc_*(z_0\iter{kn})\hookrightarrow\Hom_*(\{\An{n}<nc+\epsilon\},\{\An{n}<nc\}).
\end{align*}
This gives us a contradiction. Indeed, the local homology $\Loc_*(z_0\iter{kn})$ is non-trivial for all $n\in\K$, and the homological vanishing property of Lemma~\ref{l:homological_vanishing} implies that $\iota_*$ must be trivial provided $n$ is large enough.
\end{proof}

\subsection{Symplectically degenerate minima}

The reader may wonder if there is a  notion of symplectically degenerate minimum. Such a notion exists and is  analogous to the one of symplectically degenerate maximum, see Hein \cite{Hein:The_Conley_conjecture_for_the_cotangent_bundle}. Briefly, we can say that symplectically degenerate minima arise when working  with superlevel sets instead of sublevel sets. In this section, we give a definition using generating functions, and we state the main properties corresponding to the statements proved in the previous subsections for symplectically degenerate maxima. The proofs,  \emph{mutatis mutandis}, are entirely analogous and are left to the reader.

Let $\zz$ be a critical point of the discrete symplectic action $\An n$, with  $c:=\An n(\zz)$. We denote by $\mor^+(\zz):=2dkn-\mor(\zz)-\nul(\zz)$ the Morse co-index of $\An n$ at $\zz$, and we define the inverted local homology by
\[
\Loc_*^+(\zz):=\Hom_*(\{\An n>c\}\cup\{\zz\},\{\An n>c\}).
\]
We say that $z_0\in\T^{2d}$ is a \textbf{symplectically degenerate minimum}  when
\begin{itemize}
\setlength{\itemindent}{10pt}
\item[\textbf{(SDm1)}] $z_0$ is an isolated local minimum of the generating functions $f_0,...,f_{k-1}$,
\item[\textbf{(SDm2)}] the inverted local homology $\Loc_{dkn+d}^+(z_0\iter{kn})$ is non-trivial for infinitely many $n\in\N$.
\end{itemize}
We denote by $\K_{z_0}\subset\N$ the infinite subset of those $n$ for which \textbf{(SDm2)} holds. The easiest example of symplectically degenerate minimum $z_0$ is when $k=1$ and $z_0$ is an totally degenerate local minimum of the generating function $f_0$. The homological vanishing property of Lemma~\ref{l:homological_vanishing} holds for symplectically degenerate minima after replacing the local homology with the inverted local homology, and all the sublevel sets with superlevel sets. Theorem~\ref{t:symplectically_degenerate_maxima} can be modified as follows.

\begin{thm}
\label{t:symplectically_degenerate_minima}
Let $z_0$ be a symplectically degenerate minimum of a Hamiltonian diffeomorphism $\phi=\psi_{k-1}\circ...\circ\psi_0$ of the standard symplectic torus $(\T^{2d},\omega)$. Assume that $z_0$ is isolated in the  set of contractible fixed points of $\phi^n$ for all $n\in\K_{z_0}$. Then, there exist $n_{z_0}\in\K_{z_0}$ and a sequence $\{z_n\,|\,n\in\K_{z_0},\, n\geq n_{z_0}\}$ of contractible periodic points of $\phi$ with the following properties: $z_n$ is $n$-periodic and, if we denote by  $\zz_n$  the critical point of $\An{n}$ corresponding to $z_n$,  
we have $\An{n}(\zz_n)<\An{n}(z_0\iter{k n})$ and

\vspace{2pt}

$\displaystyle \s\s\s\s\s\s\s\s\s\s\lim_{n\to\infty} \Big(\An{n}(z_0\iter{k n}) - \An{n}(\zz_n) \Big)=0.$ \hfill\qed
\end{thm}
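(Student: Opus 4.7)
The plan is to mirror the proof of Theorem~\ref{t:symplectically_degenerate_maxima}, systematically swapping sublevel sets for superlevel sets, Morse index for Morse co-index, and local homology for inverted local homology. I would argue by contradiction: assuming no such sequence exists, there are $\epsilon>0$ and an infinite subset $\K\subset\K_{z_0}$ such that $\An n$ has no critical values in $(nc-\epsilon,nc)$ for every $n\in\K$, where $c:=\A(z_0\iter k)$.

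The first ingredient is the dual of Proposition~\ref{p:injection_of_local_homology__general_statement}: under the above hypothesis the inclusion should induce a monomorphism
$$\iota_*:\Loc_*^+(z_0\iter{kn})\hookrightarrow\Hom_*(\{\An n>nc-\epsilon\},\{\An n>nc\}).$$
Its proof is literally that of Proposition~\ref{p:injection_of_local_homology__general_statement} with the anti-gradient flow replaced by the gradient flow; the Shifting-Theorem computation underlying Proposition~\ref{p:maximal_degree_local_homology} shows dually that an isolated critical point with non-trivial inverted local homology in maximal degree $\mor^+(p)+\nul(p)$ is a topological local minimum of $f$ restricted to a centre manifold, and this produces isolating neighbourhoods whose gradient-flow exit set lies in $\{f>f(p)\}$ and whose entry set lies in $\{f\leq f(p)-\delta\}$.

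The central step is the analog of the Homological Vanishing Lemma~\ref{l:homological_vanishing}: for $n\in\K_{z_0}$ large enough, $\iota_*$ is trivial. To represent $\Loc_{dkn+d}^+(z_0\iter{kn})$ by an explicit disc, I would replace the subspace $\E_n$ of that lemma with
$$\tilde\E_n=\bigl\{\zz=(x_0,y_0,\ldots,x_{kn-1},y_{kn-1})\,:\,x_0=x_1=\cdots=x_{kn-1}=x\bigr\},$$
again of dimension $dkn+d$. The telescoping symplectic pairing vanishes identically on $\tilde\E_n$, so $\An n|_{\tilde\E_n}=\sum_j f_{j\bmod k}(x,y_j)$ has an isolated local minimum at $z_0\iter{kn}$ by (SDm1), and by the dual of Proposition~\ref{p:maximal_degree_local_homology}(ii) the polydisc $\tilde W_n=\{|x|\leq r,\ |y_j|\leq R\}\cap\tilde\E_n$ represents a generator of $\Loc_{dkn+d}^+(z_0\iter{kn})$. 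The vanishing homotopy is $\tilde h(t,\zz)=\zz+t\zz''$ with $\zz''$ translating only the $x$-coordinates of indices $j\in\{kn',\ldots,kn-1\}$ by a fixed vector $v$ of norm $r$; this deforms $\tilde W_n$ out of $\tilde\E_n$ and introduces into the action a linear contribution $t\langle v,y_{kn-1}-y_{kn'-1}\rangle$, bounded in absolute value by $2Rr$, together with the non-negative terms $f_{j\bmod k}(x+tv,y_j)$, for which a positive monotone function $\tilde\rho$ dual to $\rho$ gives lower bounds $\tilde\rho(|\cdot|/2)$. Choosing $r<\min\{\epsilon,\tilde\rho(R)\}/(2R)$, then $n'$ large enough that $kn'\tilde\rho(r/4)>2Rr$, and finally $n>2n'$, the same balancing estimates as in the proof of Lemma~\ref{l:homological_vanishing} yield $\An n\circ\tilde h>nc-\epsilon$ throughout the homotopy, $\An n\circ\tilde h|_{t=1}>nc$ on all of $\tilde W_n$, and $\An n\circ\tilde h>nc$ on $\partial\tilde W_n$ for every $t$; this trivializes $\iota_*$ and, combined with its injectivity, contradicts~(SDm2).

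The main technical point to verify is the sign swap in the dual estimate. In the SDM argument one exploited the \emph{upper} bound $-|x_j-x_{j+1}|^2+f_j(x_{j+1},w+x_{j+1}-x_j)\leq-\rho(|w|/2)$ on a subspace where the substitution $y_j=w+x_{j+1}-x_j$ turned the sign-indefinite pairing into a manifestly negative telescoping quadratic. In the SDm case there is no such useful quadratic available, and one must instead force the sign-indefinite pairing to vanish entirely at $t=0$; this is precisely what dictates the choice of $\tilde\E_n$ and of the $x$-direction of the shift, and is what makes the dual estimate $\langle y_j,x_j-x_{j+1}\rangle+f_{j\bmod k}(x_{j+1},y_j)\geq\tilde\rho$ on the shifted polydisc the harder part of the construction. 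Aside from this adjustment, the iteration results of Proposition~\ref{p:iteration_properties_Long} and the structural statements of Proposition~\ref{p:local_homology_top_deg} transpose to the inverted-local-homology setting without change.
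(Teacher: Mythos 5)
Your proof is correct and fills in the details the paper explicitly leaves to the reader (``the proofs, \emph{mutatis mutandis}, are entirely analogous''). The one step that does not follow by naively flipping signs --- and you correctly isolate it as the crux --- is the choice of the representing subspace $\tilde\E_n=\{x_0=\cdots=x_{kn-1}\}$: on it the pairing $\sum_j\langle y_j,x_j-x_{j+1}\rangle$ vanishes identically rather than being dominated by a negative telescoping quadratic, so $\An n|_{\tilde\E_n}=\sum_j f_{j\bmod k}(x,y_j)\geq 0$ follows at once from (SDm1), the shift of the $x$-coordinates for $j\geq kn'$ produces the controllable linear term $t\langle v,y_{kn-1}-y_{kn'-1}\rangle$ together with $f$-terms to which the dichotomy $\max\{|x|,|x+v|\}\geq r/2$ applies, and the remaining ingredients (the superlevel versions of Propositions~\ref{p:maximal_degree_local_homology} and~\ref{p:injection_of_local_homology__general_statement}, and the index computations of Propositions~\ref{p:iteration_properties_Long} and~\ref{p:local_homology_top_deg}) transpose mechanically as you describe.
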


\section{The Conley conjecture}\label{s:Conley_conjecture}

Theorem~\ref{t:symplectically_degenerate_maxima} allows us to provide an easy proof of the Conley conjecture for the special case of Hamiltonian diffeomorphisms of standard symplectic tori that can be described by a single generating function (this latter condition being always satisfied for Hamiltonian diffeomorphisms that are sufficiently $C^1$-close to the identity).

\begin{thm}\label{t:conley}
Let $\phi$ be a Hamiltonian diffeomorphism of the standard symplectic torus $(\T^{2d},\omega)$ that is described by a single generating function. If the set of contractible fixed points of $\phi$ is finite, then $\phi$ has a contractible periodic point of basic period $p$ for all prime numbers $p$ large enough.
\end{thm}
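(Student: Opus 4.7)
I will argue by contradiction, reducing the statement to the argument underlying Theorem~\ref{t:symplectically_degenerate_maxima} at a single large period. Suppose there is an infinite set of primes $\mathcal{P}$ for which $\phi$ has no contractible periodic orbit of basic period $p\in\mathcal{P}$. Since $\phi$ admits a single generating function we are in the case $k=1$, so $\mathrm{crit}(\A)=\mathrm{crit}(f_0)$ is the finite set of contractible fixed points of $\phi$; by primality of $p$, the contractible $p$-periodic points of $\phi$ coincide, for every $p\in\mathcal{P}$, with this same finite set.

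The first step is to detect a candidate symplectically degenerate maximum via Morse theory on $\An p$. The factorization $\An p\circ\tau^{-1}(z_0,\zzeta)=\langle A\cchi,\uupsilon\rangle+B(z_0,\zzeta)$ from the proof of Proposition~\ref{p:Palais_Smale} writes $\An p$ as the sum of a non-degenerate quadratic form of Morse index $d(p-1)$ and a bounded function on the compact factor $\T^{2d}$; a routine deformation together with K\"unneth in $\Z_2$-coefficients then yields
\[
\Hom_{dp+d}\bigl((\R^{2d})^{\times p}/\Z^{2d},\,\{\An p<-M\}\bigr)\cong\Hom_{2d}(\T^{2d})\cong\Z_2\quad\text{for }M\gg0.
\]
The generalized Morse inequality~\eqref{e:Morse_inequality} produces a critical point of $\An p$ with non-trivial local homology in degree $dp+d$, which by the contradiction hypothesis must be of the form $z_i^{\iter{p}}$ for some $z_i\in\mathrm{crit}(f_0)$. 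Pigeonholing over $\mathcal{P}$ and the finite set $\mathrm{crit}(f_0)$ yields a fixed point $z_0$ such that $\Loc_{dp+d}(z_0^{\iter{p}})\neq0$ for infinitely many $p\in\mathcal{P}$; passing to this subsequence I may assume $\mathcal{P}\subset\K_{z_0}$, which is exactly (SDM2) for $z_0$.

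To verify (SDM1), Proposition~\ref{p:local_homology_top_deg}(ii) applied at two distinct primes $p_1,p_2\in\mathcal{P}$ forces every eigenvalue of $\diff\phi(z_0)$ to be simultaneously a $p_1$-th and a $p_2$-th root of unity, hence equal to $1$. Thus $\diff\phi(z_0)$ is unipotent, $\nul(z_0^{\iter{n}})=\dim\ker(\diff\phi(z_0)^n-I)=\nul(z_0)$ is independent of $n$, and $z_0$ is isolated in $\mathrm{crit}(\A)=\mathrm{crit}(f_0)$ by assumption. Proposition~\ref{p:local_homology_top_deg}(v), applied with $n=p\in\mathcal{P}$ and $m=1$, then places $1\in\K_{z_0}$, meaning $\Loc_{2d}(z_0)\neq0$. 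Because $\Loc_{2d}(z_0)$ is the local homology of $f_0$ at $z_0$ on the $2d$-manifold $\T^{2d}$, its non-vanishing in top degree is equivalent to $z_0$ being an isolated local maximum of $f_0$, giving (SDM1). Hence $z_0$ is a symplectically degenerate maximum.

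The contradiction now comes from running the argument of the proof of Theorem~\ref{t:symplectically_degenerate_maxima} at a single large prime $p\in\K_{z_0}\cap\mathcal{P}$. Set $c:=f_0(z_0)$; under the contradiction hypothesis the critical values of $\An p$ form the finite set $\{p\,f_0(z_i)\}$, so for any fixed small $\epsilon>0$ and $p$ sufficiently large the interval $(pc,pc+\epsilon]$ contains no critical value of $\An p$. Proposition~\ref{p:injection_of_local_homology__general_statement} then injects $\Loc_{dp+d}(z_0^{\iter{p}})\hookrightarrow\Hom_{dp+d}(\{\An p<pc+\epsilon\},\{\An p<pc\})$, while Lemma~\ref{l:homological_vanishing} forces this same map to be zero once $p\in\K_{z_0}$ is large enough; the non-vanishing of $\Loc_{dp+d}(z_0^{\iter{p}})\cong\Z_2$ is then absurd. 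The main difficulty in the plan is the verification of (SDM1), which hinges on descending, via Proposition~\ref{p:local_homology_top_deg}(v) and the unipotence of $\diff\phi(z_0)$, from the primes in $\mathcal{P}$ to the index $n=1$; the Morse detection step and the final contradiction are a direct repackaging of the argument behind Theorem~\ref{t:symplectically_degenerate_maxima}.
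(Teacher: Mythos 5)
Your proposal is correct and essentially reproduces the paper's argument: Morse theory and pigeonholing yield a fixed point $z_0$ satisfying \textbf{(SDM2)}, Floquet-multiplier analysis shows $\diff\phi(z_0)$ is unipotent and hence $z_0$ satisfies \textbf{(SDM1)}, and Lemma~\ref{l:homological_vanishing} together with Proposition~\ref{p:injection_of_local_homology__general_statement} (the engine behind Theorem~\ref{t:symplectically_degenerate_maxima}) delivers the contradiction. The only notable variation is your shortcut for unipotence, applying Proposition~\ref{p:local_homology_top_deg}(ii) at two distinct primes so that each Floquet multiplier is simultaneously a $p_1$-th and a $p_2$-th root of unity, which neatly replaces the paper's bookkeeping with the maximal denominator $p_0$ of the rational Floquet angles and Proposition~\ref{p:same_nullity}.
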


\begin{proof}
Let $\phi$ be a Hamiltonian diffeomorphism satisfying the assumptions of the theorem. In particular, adopting the notation of section~\ref{s:Preliminaries}, we can take $k=1$ in the factorization~\eqref{e:factorization_phi}. Let us assume that the theorem does not hold for $\phi$: thus $\phi$ must have only finitely many contractible fixed points and, for all  $p$ belonging to an infinite set $\K$ of prime numbers, all the contractible $p$-periodic points of $\phi$ must be fixed points.  In order to get a contradiction and complete the proof, it is enough to show that $\phi$ admits a symplectically degenerate maximum $z_0$ with $\K_{z_0}=\N$, and then apply Theorem~\ref{t:symplectically_degenerate_maxima} and Remark~\ref{r:growth_rate}.

Let $\F\subset  \C$ be the (finite) subset of the Floquet multipliers of the contractible fixed points of $\phi$, and  $\F':=\{q\in \Q\,|\, e^{i2\pi q}\in \F\}$. We denote by $p_0$ the maximum among the denominators of the rational numbers in $\F'$, and we set $\K':=\K\cap(p_0,\infty)$. By Proposition~\ref{p:same_nullity}, for every contractible fixed point $z_0$ of $\phi$, we have
\begin{align}\label{e:same_nullity_in_proof_main_thm}
 \nul(z_0)=\nul(z_0^{\times n}),\s\s\forall n\in\K'.
\end{align}

Consider an arbitrary $n\in\K'$. By our assumptions on $\phi$, the discrete symplectic action $\An p:(\R^{2d})\iter n/\Z^{2d}\to\R$ has only finitely many critical points. Fix $c>0$ large enough so that all the critical values of $\An n$ are contained in the interval $(-c,c)$. Following McDuff and Salamon as in the proof of Proposition~\ref{p:Palais_Smale}, after a suitable change of coordinates we can assume that the domain of the function $\An n$ is $\T^{2d}\times(\R^{2d})^{\times n-1}$, and
\begin{align*}
\An n(z_0,\zzeta)=Q(\zzeta) + B(z_0,\zzeta), 
\s\s\forall z_0\in\T^{2d},\ \zzeta\in(\R^{2d})^{\times n-1}. 
\end{align*}
Here $Q(\zzeta)=\langle Q'\zzeta,\zzeta\rangle$ is a non-degenerate quadratic form with Morse index $dn-d$, while $B$ is a function of the form $B:(\T^{2d})^{\times n}\to\R$. Let us consider the orthogonal spectral decomposition associated with the symmetric matrix $Q'$, i.e.
\[
(\R^{2d})^{\times n-1}=\E^+\oplus \E^-,
\]
where $\E^+$ [resp.\ $\E^-$] is the positive [resp.\ negative] eigenspace of $Q'$. Since the Morse index of $Q$ is $dn-d$, we have
\begin{align*}
\dim(\E^+)=\dim(\E^-)=dn-d.
\end{align*}
For every $r>0$ we define
\begin{align*}
N(r)&:=\big\{ (z_0,\zzeta)\in\T^{2d}\times(\R^{2d})^{\times n-1}\ \big|\ Q(\zzeta)\leq r \big\},\\
L(r)&:=\big\{ (z_0,\zzeta)\in N(r)\ \big|\ Q(\zzeta)\leq- r \big\}.
\end{align*}
By the K\"unneth formula and excision,  we have
\begin{align*}
\Hom_j(N(r),L(r))&\cong\bigoplus_{i\in\Z}\Hom_{j-i}(\T^{2d})\otimes\Hom_i(\{Q\leq r\},\{Q\leq -r\})\\
&\cong\bigoplus_{i\in\Z}\Hom_{j-i}(\T^{2d})\otimes\Hom_i(D^{dn-d},\partial D^{dn-d})\\
&\cong \Hom_{j-(dn-d)}(\T^{2d}), 
\end{align*}
where $D^{dn-d}$ is the unit disc in $\E^-$. In particular
\begin{align}\label{e:homology_isolating_block}
\Hom_{dn+d}(N(r),L(r))\cong\Hom_{2d}(\T^{2d})\neq 0.
\end{align}
For $r>0$ large enough, the set $N(r)$ contains all the critical points of $\An n$, $L(r)$ contains no critical points of $\An n$, and  the gradient of $\An n$ is transverse to the boundaries of $N(r)$ and $L(r)$. Hence, we can use the gradient flow of $\An n$ to deform  $N(r)$ onto the sublevel set $\{\An n\leq c\}$ and $L(r)$ onto the sublevel set $\{\An n\leq -c\}$ respectively. This, together with~\eqref{e:homology_isolating_block}, implies
\[
\Hom_{dn+d}(\{\An n<c\},\{\An n<-c\})
\cong
\Hom_{dn+d}(N(r),L(r))\neq 0.
\]
By the Morse inequality (see~\eqref{e:Morse_inequality}) we infer that there exists a critical point $\zz(n)$ of $\An n$ such that 
\begin{align}\label{e:local_homology_Conley_Zehnder}
\Loc_{dn+d}(\zz(n))\neq0. 
\end{align}

By the  assumptions on $\phi$ that we made in the first paragraph of the proof, for every $n\in\K'$,  the periodic point of $\phi$ corresponding to $\zz(n)$ is a fixed point. Since $\phi$ has only finitely many contractible fixed points, one of them, say $z_0$, must correspond to infinitely many critical points in the sequence $\{\zz(n)\ |\ n\in\K'\}$. Namely, we can find an infinite subset $\K''\subset\K'$ such that $\zz(n)=z_0^{\times n}$ for all $n\in\K''$. Thus, we have found a contractible fixed point $z_0$ satisfying assumption \textbf{(SDM2)} in the definition of symplectically degenerate maximum (see section~\ref{s:def_sympl_deg_max}).

Now, fix an arbitrary $n\in\K''$. By Proposition~\ref{p:local_homology_top_deg}(ii), the only Floquet multiplier of $\phi^n$ at $z_0$ is~$1$. Thus the Floquet multipliers of $\phi$ at $z_0$ are all complex $n$th roots of $1$. By~\eqref{e:same_nullity_in_proof_main_thm} and Proposition~\ref{p:same_nullity}, the only Floquet multiplier of $\phi$ at $z_0$ is $1$ as well. This, together with Proposition~\ref{p:local_homology_top_deg}(vi),  implies that the local homology $\Loc_{2d}(z_0)$ is non-trivial, which is equivalent to the fact that $z_0$ is an isolated local maximum of the generating function $\An 1=f_0:\T^{2d}\to\R$. Therefore $z_0$ satisfies \textbf{(SDM1)}, and it is a symplectically degenerate maximum with $\K_{z_0}=\N$.
\end{proof}

\begin{rem}
Theorem~\ref{t:conley} implies that the growth rate of the periodic orbits of $\phi$ is at least like the one of prime numbers. Namely, if $N(p)$ is the number of contractible periodic orbits of $\phi$ of basic period less than or equal to $p$, we have

\vspace{5pt}

$\displaystyle \s\s\s\s\s\s\s\s\s\s\s\s\s\s\liminf_{p\to\infty} N(p) \frac{\log p}{p}\geq1.$ \hfill\qed
\end{rem}

If we relax the assumption that $\phi$ is described by a single generating function, the proof of Theorem~\ref{t:conley} goes through almost entirely, except for the last paragraph. There, the proof would still allow to conclude that the fixed point $z_0$ has an associated critical point $\zz=(z_0,...,z_{k-1})$ of $\An 1$ with non-trivial local homology $\Loc_{dk+d}(\zz)$. However, since  $k$ is now larger than 1, the discrete symplectic action in period $1$ is not merely the generating function $f_0$, it is rather a function of the form $\An 1:(\R^{2d})\iter k/\Z^{2d}\to\R$. Thus, in order to conclude, one must apply an argument due to Hingston \cite[section~4]{Hingston:Subharmonic_solutions_of_Hamiltonian_equations_on_tori} (see also \cite[section~5]{Ginzburg:The_Conley_conjecture}) showing that, in this situation, one can replace the factorization of $\phi$ with another equivalent one having the property that $z_0$ is an isolated local maximum of all the generating functions of the new factors.

As it was already noticed by Conley and Zehnder in one of their celebrated papers \cite{Conley_Zehnder:Subharmonic_solutions_and_Morse_theory}, the situation is significantly simpler if one considers Hamiltonian diffeomorphisms whose periodic points are all non-degenerate. Indeed, under this assumption, none of the periodic points satisfies condition \textbf{(SDM2)}. We conclude the paper by providing a proof of  Conley-Zehnder's Theorem under the slightly weaker assumption that the contractible fixed points are non-degenerate. 

\begin{thm}
Let $\phi$ be a Hamiltonian diffeomorphism of the standard symplectic torus $(\T^{2d},\omega)$ all of whose contractible fixed points are non-degenerate (i.e.\ none of their Floquet multipliers is equal to $1$). Then, for all prime numbers $p$ large enough, $\phi$ has at least two contractible periodic points of basic period $p$ and belonging to different orbits.
\end{thm}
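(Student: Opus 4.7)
The plan is to refine the Conley--Zehnder-style argument of Theorem~\ref{t:conley} and to extract information from \emph{both} extremal degrees $dkp+d$ and $dkp-d$ of the relative homology of the discrete symplectic action. As preliminaries, by non-degeneracy together with Proposition~\ref{p:Palais_Smale} the set of contractible fixed points of $\phi$ is finite, say $w_1,\dots,w_M$, and none of their Floquet multipliers equals~$1$. I would fix a threshold $p_0$ so large that, for every prime $p>p_0$, no power $\lambda^p$ of a Floquet multiplier $\lambda$ at any $w_i$ equals~$1$, and so that $|p\,\avmas(\ww_i)|>2d$ whenever $\avmas(\ww_i)\neq 0$. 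By Proposition~\ref{p:nullity} this makes every iterate $\ww_i\iter{p}$ a non-degenerate critical point of $\An{p}$, and by Proposition~\ref{p:iteration_properties_Long}(i) both iteration inequalities become strict. Proposition~\ref{p:Symplectic_Morse_index_Theorem} then places $\mor(\ww_i\iter{p})$ strictly inside the open interval $(p\,\avmas(\ww_i)+dkp-d,\ p\,\avmas(\ww_i)+dkp+d)$, and a case split on whether $\avmas(\ww_i)$ vanishes shows that this Morse index avoids the two values $dkp\pm d$ altogether.

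Next, I would extract two ``extremal'' critical points by Morse theory. Applying the McDuff--Salamon decomposition of $\An{p}$ exactly as in the proofs of Proposition~\ref{p:Palais_Smale} and Theorem~\ref{t:conley}, the K\"unneth formula and excision yield
\[
\Hom_j\bigl(\{\An{p}<c\},\{\An{p}<-c\}\bigr)\ \cong\ \Hom_{j-(dkp-d)}(\T^{2d})
\]
for $c$ larger than every critical value of $\An{p}$. In particular the right-hand side equals $\Z_2$ in degrees $dkp+d$ and $dkp-d$. The Morse inequality~\eqref{e:Morse_inequality}, combined with the previous paragraph, then produces critical points $\ww_p^+$ and $\ww_p^-$ of $\An{p}$ with $\Loc_{dkp+d}(\ww_p^+)\neq 0$ and $\Loc_{dkp-d}(\ww_p^-)\neq 0$, neither of which can be an iterate of a contractible fixed point of $\phi$. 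Since $p$ is prime, each corresponds to a contractible periodic point of $\phi$ of basic period exactly~$p$.

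The main obstacle is to prove that $\ww_p^+$ and $\ww_p^-$ belong to \emph{different} $\phi$-orbits; this subtlety does not arise in the existence-only Theorem~\ref{t:conley}. I would argue by contradiction using the cyclic shift by $k$ coordinates on $(\R^{2d})^{\times kp}/\Z^{2d}$, which preserves $\An{p}$ and defines a diffeomorphism permuting the critical points attached to the points of a single $\phi$-orbit. If $\ww_p^+$ and $\ww_p^-$ were on the same orbit, they would share the same Morse index $m$ and nullity $n$ and have isomorphic local homologies; in particular $\ww_p^-$ would also satisfy $\Loc_{dkp+d}(\ww_p^-)\neq 0$. The simultaneous non-vanishing of $\Loc_{dkp+d}(\ww_p^-)$ and $\Loc_{dkp-d}(\ww_p^-)$ would then force $m\leq dkp-d$ and $m+n\geq dkp+d$, i.e.\ $n=2d$ and $m=dkp-d$, so that $dkp+d$ is exactly the maximal degree $m+n$ for $\ww_p^-$. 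Proposition~\ref{p:maximal_degree_local_homology}(i) would then concentrate the local homology of $\ww_p^-$ in degree $dkp+d$, giving $\Loc_{dkp-d}(\ww_p^-)=0$ and contradicting the choice of $\ww_p^-$. I expect this concentration step --- together with the clean verification that the cyclic shift induces isomorphisms of local homology groups --- to be the most delicate part of the argument, even though conceptually it only assembles results already proved in Section~\ref{s:Preliminaries}.
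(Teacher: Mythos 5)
Your proposal is correct and follows essentially the same route as the paper's proof: the same iteration estimates place $\mor(\ww_i\iter{p})$ away from $dkp\pm d$ for large primes $p$, the same K\"unneth/excision computation extracts critical points with non-trivial local homology in degrees $dkp+d$ and $dkp-d$, and the same cyclic-shift invariance of $\An{p}$ together with Proposition~\ref{p:maximal_degree_local_homology}(i) forces them onto distinct orbits. The only stylistic difference is that you leave implicit the (standard) fact that a non-degenerate critical point has local homology concentrated in the degree of its Morse index, which the paper spells out when ruling out iterates of fixed points.
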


\begin{proof}
Let us adopt the notation of section~\ref{s:discrete_symplectic_action}: in particular we consider a suitable factorization of $\phi$ as in~\eqref{e:factorization_phi} and, for every period $n\in\N$, we consider the associated discrete symplectic action $\An n:(\R^{2d})^{\times kn}/\Z^{2d}\to\R$ defined in~\eqref{e:symplectic_action_period_n}. By our assumption and Proposition~\ref{p:nullity}, all the critical points of $\An 1$ are non-degenerate. In particular, they are all isolated critical points, and since they are contained in a compact subset of the domain of $(\R^{2d})^{\times k}/\Z^{2d}$ (see Proposition~\ref{p:Palais_Smale}) they are finitely many.

Let $\F\subset  \C$ be the (finite) subset of the Floquet multipliers of the contractible fixed points of $\phi$, and  $\F':=\{q\in \Q\,|\, e^{i2\pi q}\in \F\}$. We denote by $p_0$ the maximum among the denominators of the rational numbers in $\F'$, and by $\K'$ the set of all prime numbers larger than $p_0$. By Proposition~\ref{p:same_nullity}, for any critical point $\zz$ of $\An 1$, we have
\begin{align}\label{e:Conley_Zehnder_nul_zero}
 \nul(\zz^{\times n})=\nul(\zz)=0,\s\s\forall n\in\K'.
\end{align}
By Propositions~\ref{p:iteration_properties_Long}(i) and~\ref{p:Symplectic_Morse_index_Theorem}, if $\avmas(\zz)=0$ we have that $-d<\mas(\zz\iter n)<d$ and thus $dkn-d<\mor(\zz\iter n)<dkn+d$ for all $n\in\K'$. If $\avmas(\zz)\neq0$, for all $n\in\K'$ larger than some $n_{\zz}$ we have that $|\mas(\zz\iter n)|>d$, and either $\mor(\zz\iter n)<dkn-d$ or $\mor(\zz\iter n)>dkn+d$. We denote by $n_0$ the maximum among the $n_{\zz}$ (for all critical points $\zz$ of $\An 1$), and by $\K$ the set $\K'\cap[n_0,\infty)$ of all prime numbers larger than $p_0$ and $n_0$. Summing up, for all critical points $\zz$ of $\An 1$ we have
\begin{align}\label{e:Conley_Zehnder_mas_good}
 \mor(\zz^{\times n})\not=dkn-d\s\mbox{and}\s\mor(\zz^{\times n})\not=dkn+d,\s\s\forall n\in\K.
\end{align}

Now, fix an arbitrary $n\in\K$ and choose $c>0$ large enough so that all the critical values of $\An n$ are contained in the interval $(-c,c)$. With the exact same argument as in the proof of Theorem~\ref{t:conley}, we have
\[
\Hom_{*}(\{\An n<c\},\{\An n<-c\})
\cong
\Hom_{*-(dkn-d)}(\T^{2d}).
\]
In particular $\Hom_{j}(\{\An n<c\},\{\An n<-c\})$ is non-trivial for $j=dkn-d$ and $j=dkn+d$. Let us assume that the discrete symplectic action $\An n$ has only finitely many critical points (otherwise $\phi$ has infinitely many contractible periodic points with basic period $n$, and we are done). By the generalized Morse inequality~\eqref{e:Morse_inequality}, $\An n$ has critical points $\zz'$ and $\zz''$ such that $\Loc_{dkn-d}(\zz')$ and $\Loc_{dkn+d}(\zz'')$ are non-trivial. 

Notice that the contractible periodic points corresponding to  $\zz'$ and $\zz''$ are not fixed points, and thus have basic period $n$. Indeed, by~\eqref{e:Conley_Zehnder_mas_good}, every critical point of $\An n$ of the form $\zz\iter n$ has Morse index different than $dkn\pm d$. By~\eqref{e:Conley_Zehnder_nul_zero}, $\zz\iter n$ is a non-degenerate critical point, and thus its local homology is non-trivial only in degree $\mor(\zz\iter n)$.

If we write $\zz'=(z_0',...,z_{kn-1}')$, we have that $z_{k(j+1)}'=\phi(z_{kj}')$ for all $j\in\Z_{n}$. Notice that there is an action of $\Z_{n}$ on the domain of $\An n$ generated by the shift 
\[
(z_0,z_1,...,z_{kn-1})\mapsto(z_{k},z_{k+1},...,z_{kn-1},z_0,z_1,...,z_{k-1}).
\]
The discrete symplectic action is invariant by this action. Moreover the set of critical points of $\An n$ corresponding to the periodic points in the $\phi$-orbit of $z_0'$ is precisely the $\Z_n$-orbit of $\zz'$. All these critical points share the same Morse index, nullity and local homology. 

In order to conclude the proof, we will show that $\zz''$ does not belong to the  $\Z_n$-orbit of $\zz'$. We proceed by contradiction assuming that this is not true. In particular, $\zz'$ and $\zz''$ have the same local homology, and thus $\Loc_j(\zz'')$ is non-trivial in both degrees $j=dkn-d$ and $j=dkn+d$. This implies $\mor(\zz'')\leq dkn-d$ and $dkn+d\leq \mor(\zz'')+\nul(\zz'')$. Since $\nul(\zz'')\leq 2d$, these two inequalities are actually equalities, i.e. 
\begin{align*}
\mor(\zz'')&=dkn-d,\\
\nul(\zz'')&=2d.
\end{align*}
Therefore, the local homology $\Loc_j(\zz'')$ is non-trivial in maximal degree $j=dkn+d=\mor(\zz'')+\nul(\zz'')$, and according to Proposition~\ref{p:maximal_degree_local_homology}(i) it must be trivial in all the other degrees. This is in contradiction with the fact that $\Loc_{dkn-d}(\zz'')$ is non-trivial.
\end{proof}

\bibliography{_biblio}
\bibliographystyle{amsalpha}

\vspace{0.5cm}

\end{document}